\begin{document}

\newcommand{\1}{{{\bf 1}}}
\newcommand{\id}{{\rm id}}
\newcommand{\Hom}{{\rm Hom}\,}
\newcommand{\End}{{\rm End}\,}
\newcommand{\Res}{{\rm Res}\,}
\newcommand{\Image}{{\rm Im}\,}
\newcommand{\Ind}{{\rm Ind}\,}
\newcommand{\Aut}{{\rm Aut}\,}
\newcommand{\Ker}{{\rm Ker}\,}
\newcommand{\gr}{{\rm gr}}
\newcommand{\Der}{{\rm Der}\,}
\newcommand{\Span}{{\rm Span}}
\newcommand{\tr}{{\rm tr}}

\newcommand{\Z}{\mathbb{Z}}
\newcommand{\Q}{\mathbb{Q}}
\newcommand{\C}{\mathbb{C}}
\newcommand{\N}{\mathbb{N}}

\newcommand{\g}{\mathfrak{g}}
\newcommand{\h}{\mathfrak{h}}
\newcommand{\wt}{{\rm wt}\;}
\newcommand{\CR}{\mathcal{R}}
\newcommand{\D}{\mathcal{D}}
\newcommand{\E}{\mathcal{E}}
\newcommand{\Lie}{\mathcal{L}}
\newcommand{\z}{\bf{z}}
\newcommand{\bflam}{\bf{\lambda}}

\newcommand{\levelk}{\bf{k}}

\newcommand \be{\begin{equation}\label}
\newcommand \ee{\end{equation}}
\newcommand \bex{\begin{exa}\label}
\newcommand \eex{\end{exa}}
\newcommand \bl{\begin{lem}\label}
\newcommand \el{\end{lem}}
\newcommand \bt{\begin{thm}\label}
\newcommand \et{\end{thm}}
\newcommand \bp{\begin{prop}\label}
\newcommand \ep{\end{prop}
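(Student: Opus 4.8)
The plan is to prove the statement by transporting the question into the representation theory of the affine Kac--Moody algebra $\widehat{\g}$ --- together with the current algebra $\g\otimes\C[t]$ localized at the points of $\z$ --- where the modules of level $\levelk$ with highest weights $\bflam$ are under good control. First I would make every identification explicit: realize the object in question as (a quotient of, or a space of coinvariants in) the tensor product $\bigotimes_i M(\lambda_i)$, carrying the diagonal $\g$-action and the $\widehat{\g}$-action in which a current $x(w)$ acts through its Laurent expansion at each marked point. This is exactly the step that translates the vertex-algebraic data --- the state--field correspondence, the vacuum $\1$, and the conformal grading given by the weight operator $\wt$ --- into purely Lie-theoretic data, and it is worth doing carefully, because the later counting arguments depend on having the $\g$- and $\widehat{\g}$-actions and the grading pinned down simultaneously.

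The key steps, in order, are the following. (i) \emph{Well-definedness}: check that the proposed map (or action) respects the defining relations; by the reconstruction theorem for vertex algebras it is enough to verify the Borcherds--Jacobi identity on the finitely many generating fields, which is a bounded computation with their modes and operator product expansions, and in the simplest cases this step is essentially the whole proof. (ii) \emph{Spanning set}: use the PBW theorem for $\widehat{\g}$ together with the translation property $\tfrac{d}{dw}x(w)=[L(-1),x(w)]$ to push lowering modes to the right and raising modes to the left, so that every vector becomes a combination of normally ordered monomials of a controlled shape applied to the vacuum or highest-weight vector. (iii) \emph{Character count}: using the $L(0)$-eigenspace decomposition as bookkeeping, compute the graded character of the span and compare it with the known character --- the Weyl--Kac character for an integrable $\widehat{\g}$-module, or the character of the corresponding Weyl or Demazure module --- which forces the spanning set of (ii) to be a basis and the map of (i) to be an isomorphism.

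The step I expect to be the main obstacle is the passage in (iii) from a spanning set to a basis. Once the level-$\levelk$ relations are imposed --- for instance the integrability relations $e_\theta(-1)^{\langle\levelk,\theta^\vee\rangle+1}=0$, or the relations produced by colliding points of $\z$ --- the naive PBW ordering overcounts, so one must rule out hidden relations among the spanning monomials. I would do this in one of two ways: (a) separate the monomials by pairing them against suitable linear functionals on the module --- matrix coefficients of intertwining operators, or the pairing with the contragredient module --- which detects linear independence weight space by weight space; or (b) run a degeneration argument, specializing the configuration $\z$ (or passing to $\gr$ for the conformal-weight filtration) to a case where the answer is already known, and then invoking upper semicontinuity of the dimensions of the graded pieces to conclude that no dimension is lost. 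A secondary technical point is keeping every construction uniform in $\z$ and $\bflam$, which I would handle by working over the polynomial ring generated by the coordinates of $\z$ and specializing only at the end.
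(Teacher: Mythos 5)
Your proposal does not engage with what this proposition actually asserts, and the machinery you describe (coinvariants at marked points, PBW spanning sets, character counts, degeneration of configurations) is not what is needed here. The statement is that the twisted module $(L(k,\lambda))^{(H)} = (L(k,\lambda), Y(\Delta(H,x)\cdot,x))$ is isomorphic, as an $L(k,0)$-module, to $L(k,\lambda^{(H)})$ for a unique dominant integral $\lambda^{(H)}$ with $\langle \lambda^{(H)},\theta\rangle \le k$. The paper's argument is short and has three ingredients, none of which appear in your outline: (i) Li's general result (Proposition 2.9 of \cite{li-affine}) that applying $\Delta(H,x)$ to an irreducible module produces an irreducible \emph{weak} $L(k,0)$-module --- this is where well-definedness of the twisted action lives, and it is a citation, not a Jacobi-identity computation on generators; (ii) the regularity of $L(k,0)$ (in the sense of \cite{dlm}), which upgrades every weak module to a direct sum of ordinary irreducible modules, so the twist is an ordinary irreducible module; (iii) the classification of irreducible $L(k,0)$-modules as exactly the $L(k,\mu)$ with $\mu \in P_+$ and $\langle\mu,\theta\rangle\le k$ (\cite{dl}). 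Uniqueness of $\lambda^{(H)}$ then follows because an irreducible highest weight module is determined by its highest weight.

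The concrete gaps in your plan are these. First, you never address the step that is actually delicate: why the twisted object is an \emph{ordinary} (strong) module rather than merely a weak one. That is precisely where regularity enters, and without it the proposition fails in general --- indeed the paper points out that for $V(k,0)$ the same $\Delta$-twist can carry a strong module to a module that is only weak. Second, your step (iii) is circular in this context: you propose to ``compare with the known character,'' but identifying \emph{which} $L(k,\mu)$ arises (i.e., computing $\lambda^{(H)}$) is the content of the rest of the paper and is not available at the point where this proposition is proved; the proposition only asserts existence and uniqueness of some such $\mu$, which requires no character computation at all. Third, the apparatus of marked points $\z$ and tensor products of modules belongs to a theory of conformal blocks that plays no role anywhere in this paper.
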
}
\newcommand \br{\begin{rem}\label}
\newcommand \er{\end{rem}}
\newcommand \bc{\begin{coro}\label}
\newcommand \ec{\end{coro}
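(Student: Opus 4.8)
The plan is to identify the module $M$ presented in the statement by generators and relations with the representation $N$ it is meant to compute, by first writing down a natural surjection $M\twoheadrightarrow N$ and then forcing it to be an isomorphism via a graded-character comparison. For the surjection, one sends the distinguished cyclic generator of $M$ to the distinguished highest-weight (resp.\ vacuum-type) vector of $N$; this is well defined as soon as every defining relation of $M$ annihilates that vector in $N$. The relations coming from the $\g$-action on the top space and from the weight data $\bflam$ are immediate, while the level-$\levelk$ relations are of residue type --- the vanishing of $\Res$ of a suitable product of currents, a current raised to a power governed by $\levelk$ --- and checking them in $N$ is a direct computation with the brackets of $\Lie\g$ and the defining relations of $N$, which is exactly the kind of manipulation the $\Res$ notation is set up for. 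Since both modules are graded and the chosen generators match in degree, this gives a graded surjection $\varphi\colon M\twoheadrightarrow N$.

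The technical core is an upper bound on $M$. Using a Poincar\'e--Birkhoff--Witt ordering on $\Lie\g$ together with the defining relations, I would show that $M$ is spanned by a combinatorially indexed family of monomials --- tuples of partitions, or \emph{admissible} multisets of modes, cut out by $\bflam$ and $\levelk$. The mechanism is a Garland-type recursion extracted from the residue relations: a monomial containing a forbidden configuration (too large an exponent, or, in the principal-subspace picture, too small a gap between consecutive modes) is rewritten modulo the relations as a combination of strictly smaller monomials, and termination is proved by inducting simultaneously on the weight and on an auxiliary degree. Summing weights over this family yields a formal upper bound for the graded character of $M$, hence for that of $N$.

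It then remains to produce a matching lower bound for $\chi(N)$. The cleanest route is usually a character recursion: one realizes $N$ inside a short exact sequence relating it to modules of the same type built from smaller data (removing a box from some $\lambda_i$, or lowering $\levelk$), verifies exactness, and solves the recursion to see that $\chi(N)$ equals the combinatorial sum already obtained; alternatively one may specialize the points $\z$ to a generic configuration, degenerate a tensor product of evaluation modules, and read the bound off the associated graded via a factorization-type argument. Either way, a graded surjection between graded modules of equal character is an isomorphism, so $\varphi$ is bijective, the spanning family is in fact a basis, and the character formula drops out.

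I expect the exactness of that character recursion --- equivalently, the injectivity of a natural shift map between two presented modules --- to be the principal obstacle: it does not follow formally from the surjection, and it is precisely what pins the size of the spanning set to the value predicted by $N$. The residue/Garland bookkeeping in the upper-bound step is the other delicate point, since one must check that the relations are strong enough to eliminate \emph{every} forbidden configuration without being so strong that the count falls below $\chi(N)$.
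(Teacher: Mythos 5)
The statement you were asked about is not actually a result of this paper: the fragment \texttt{coro}\,/\,\texttt{label} is just the unused macro shell for corollary environments from the preamble, and the paper contains no corollary at all. More importantly, nothing in your proposal matches anything this paper proves. You have outlined a presentation-theoretic argument in the style of principal subspaces and Feigin--Stoyanevsky/Georgiev: a surjection from a module defined by generators and residue-type (Garland) relations onto a target module, a PBW spanning set cut out by admissible configurations, and a character recursion (or a degeneration of a tensor product of evaluation modules at points $\z$) to force equality of graded characters. None of that machinery appears here, and none of the objects you invoke (a presented module $M$, level-$\levelk$ residue relations, exact sequences obtained by lowering $\levelk$ or removing boxes from $\lambda_i$) are defined in this paper.

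What the paper actually establishes (Theorems \ref{main-hwvec} and \ref{main-measurement}) is determined by a completely different and much more elementary mechanism: given the irreducible module $L(k,\lambda)$ with its $L(k,0)$-action twisted by Li's operator $\Delta(H^{(i)},x)$ for a miniscule coweight $H^{(i)}$, one exhibits the new highest weight vector as $\sigma_X^{(i)}({\bf v})$, where $\sigma_X^{(i)}$ is an explicit Weyl group element and ${\bf v}$ is the old highest weight vector. The proof that this vector is annihilated by all $(e_j)^{(i)}$ rests on two observations: the operator $E_i(1)$ lowers conformal weight and so kills any vector in the lowest conformal weight space, and $\sigma_X^{(i)}$ permutes the set $\{\alpha_0=-\theta,\alpha_1,\dots,\alpha_\ell\}$ sending $\alpha_0$ to $\alpha_i$, so that $\lambda+(\sigma_X^{(i)})^{-1}(\alpha_j)$ is never a weight of $L(\lambda)$ for $j\neq i$. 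The new highest weight is then read off from equation (\ref{hi-delaction}) as $\sigma_X^{(i)}(\lambda)+k\lambda_i$. No character comparison, spanning-set bound, or exactness argument is needed, because irreducibility and the classification of irreducible $L(k,0)$-modules are taken as given from Li and Dong--Lepowsky. Your proposal, whatever its merits in the setting it implicitly targets, does not engage with this paper's statement or method.
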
}
\newcommand \bd{\begin{de}\label}
\newcommand \ed{\end{de}}

\newcommand \bproof{\noindent \textbf{Proof:\ \ }}
\newcommand \eproof{$\blacksquare$}

\newtheorem{thm}{Theorem}[section]
\newtheorem{prop}[thm]{Proposition}
\newtheorem{coro}[thm]{Corollary}
\newtheorem{conj}[thm]{Conjecture}
\newtheorem{exa}[thm]{Example}
\newtheorem{lem}[thm]{Lemma}
\newtheorem{rem}[thm]{Remark}
\newtheorem{de}[thm]{Definition}
\newtheorem{hy}[thm]{Hypothesis}
\makeatletter
\@addtoreset{equation}{section}
\renewcommand{\theequation}{\thesection.\arabic{equation}}
\makeatother
\makeatletter


\title{On a symmetry of the category of integrable modules}
\author{William J. Cook \footnotemark[1], Christopher M. Sadowski\footnotemark[2]\\
{\small \footnotemark[1] Department of Mathematical Sciences, 
                         Appalachian State University, Boone, NC 28608}\\
{\small \footnotemark[2] Department of Mathematics, 
                         Rutgers University, Piscataway, NJ 08854}\\}

\renewcommand{\thefootnote}{\fnsymbol{footnote}}
\footnotetext[0]{Sadowski acknowledges support from the the Rutgers Mathematics/DIMACS REU Program during the summers of 2007 and 2008, and NSF grant DMS-0603745.}

\maketitle 
 
\begin{abstract}
Haisheng Li showed that given a module $(W,Y_W(\cdot,x))$ for a vertex algebra
$(V,Y(\cdot,x))$, one can obtain a new $V$-module $W^{\Delta} = (W,Y_W(\Delta(x)\cdot,x))$ 
if $\Delta(x)$ satisfies certain natural conditions. Li presented a collection of 
such $\Delta$-operators for $V=L(k,0)$ (a vertex operator algebra associated 
with an affine Lie algebras, $k$ a positive integer). In this paper, for each irreducible
$L(k,0)$-module $W$, we find a highest weight vector of $W^{\Delta}$ when $\Delta$ is
associated with a miniscule coweight. From this we completely determine the 
action of these $\Delta$-operators on the set of isomorphism equivalence classes of 
$L(k,0)$-modules.

\vspace{0.1in}

\noindent {\it Keywords}: affine Lie algebras; vertex operator algebras.

\vspace{0.1in}

\noindent AMS Subject Classification: {\bf 17B10, 17B67, 17B69} 
\end{abstract}

\section{Introduction} 

Haisheng Li introduced his $\Delta$-operators in a very general setting in \cite{li-local}.
These operators allow one to obtain new vertex algebra modules from old ones by modifying 
the vertex algebra's action on the module while leaving the underlying vector space unchanged.
Thus, given a vertex algebra $V$ and a collection $\Delta$-operators, we obtain (usually quite interesting) symmetries of the category of $V$-modules. 

Consider the $\hat{\g}$-module $L(k,0) = L(k\Lambda_0)$ where $k$ is a positive integer
and $\hat{\g}$ is an untwisted affine Lie algebra. It is well known that $L(k,0)$ has
the structure of a vertex operator algebra (VOA), and that $\hat{\g}$-modules $L(k,\lambda)$ for
certain $\lambda$ are modules for this VOA. Define 
$\displaystyle{\Delta(H,x) = x^{H(0)}\exp \left(\sum_{n\ge 1}\frac{H(n)}{-n}(-x)^{-n}\right)}$
where $H$ is a coweight of $\g$ (the underlying finite dimensional Lie algebra). If 
$(W,Y(\cdot,x))$ is an $L(k,0)$-module, then $(W,Y(\Delta(H,x)\cdot,x))$ (call this 
module $W^{(H)}$) is also an $L(k,0)$-module. In fact, Li proved \cite{li-affine} 
that these two modules are equivalent if $H$ is a coroot. However, when $H$ is not a coroot, 
we may get a new (inequivalent) module. So, using Li's $\Delta$-operators we can induce an 
action on the equivalence classes of $L(k,0)$-modules. In his thesis \cite{c-thesis} 
(see also \cite{clm}), the first author was able to use these operators to 
obtain recurrence relations for characters of integrable highest weight  
$\hat{\g}$-modules which in turn lead to interesting combinatorial identities. These recurrence
relations were obtained by studying the effect of $\Delta(H,x)$ (for $H$ a coroot) on 
characters. If we consider a coweight instead of a coroot, we obtain
a relation between the characters of two {\it different} integrable $\hat{\g}$-modules.
 
In this paper, we completely determine the action of $\Delta(H,x)$ on equivalence classes of $L(k,0)$-modules for all coweights $H$. 

First, recall that $L(k,0)$ is a {\it regular} VOA. This implies (among other things)
that its modules are completely reducible. This means that we just need to determine $\Delta(H,x)$'s action on irreducible $L(k,0)$-modules (these are precisely the integrable 
$\hat{\g}$-modules of level $k$). 

Next, we know that the miniscule coweights provide a complete set of coset representatives 
for the coweight lattice modulo the coroot lattice. So, since the coroots give back 
equivalent modules, it is enough to study the action of $\Delta(H,x)$ on an irreducible
$L(k,0)$-module $L(k,\lambda)$ where $H$ is one of the miniscule coweights. To determine
which module $L(k,\lambda^{(H)})$ is obtained from $L(k,\lambda)$ via the 
$\Delta(H,x)$-action, it is enough to identify a highest weight vector and ``measure'' 
its weight. 

In this paper, we explicitly determine a highest weight vector for $L(k,\lambda)^{(H)}$ 
-- that is, the module $(L(k,\lambda), Y(\Delta(H,x)\cdot,x))$ -- if $L(k,\lambda)$ is an
$L(k,0)$-module and $H$ is a miniscule coweight (of $\g$). 

Recall that the $\hat{\g}$-modules $L(k,\lambda)$ are induced up from $\g$-modules
$L(\lambda)$ and in fact $L(\lambda)$ (the ``finite dimensional part'') makes
up the lowest conformal weight space of $L(k,\lambda)$. Now let us change the action
of $L(k,\lambda)$ from $Y(\cdot,x)$ to $Y(\Delta(H,x)\cdot,x)$ where $H$ is a miniscule
coweight. We get a new $L(k,0)$-module action while leaving the underlying vector space
fixed. It is interesting to note that in each case, the highest weight vector stays inside 
the lowest conformal weight space of $L(k,\lambda)$. Li in \cite{li-affine} considered the
special case, $L(k,0)$, and found that the old highest weight vector (which is the vaccuum
vector) is also the new highest weight vector. This happens because the lowest conformal
weight space is 1 dimensional -- that is, the vector has nowhere to go. On the other
hand, when we consider $L(k,\lambda)$, $\lambda \not=0$, the lowest conformal weight
space is bigger and so the highest weight vector moves and thus is harder to find.

One rather interesting feature of the action of $\Delta(H,x)$ is that the restriction on
$\lambda$ so that $L(k,\lambda)$ is an $L(k,0)$-module shows up explicitly in this action.
We know that if $L(k,\lambda)$ is an $L(k,0)$-module, then 
$\langle    \lambda,\theta \rangle  \leq k$ where $\theta$ is the highest long root of $\g$.
In each case, when acting on $L(k,\lambda)$ with $\Delta(H,x)$ where $H$ is the $j^{th}$
miniscule coweight, we see that the coefficent of $\lambda_j$ (the $j^{th}$ fundamental weight)
is replaced by $k-\langle    \lambda,\theta \rangle $. 

Without making any changes, the calculations and proofs involved in determining the
new highest weight vectors of the $L(k,0)$-modules $L(k,\lambda)^{(H)}$ apply 
to any $V(k,0)$-module as well ($V(k,0)$ is a generalized Verma module which also has 
VOA structure). It is well known that $L(k,\lambda)$ where $\lambda$ is a dominant
integral weight of $\g$ is a $V(k,0)$-module. However, if $\langle    \lambda,\theta \rangle  > k$ (so that $L(k,\lambda)$ is {\it not} an $L(k,0)$-module), then the $\Delta(H,x)$-action 
will produce an (irreducible) {\it weak} $V(k,0)$-module $L(k,\lambda^{(H)})$. But $L(k,\lambda^{(H^{(j)})})$ is not a $V(k,0)$-module (as a 
VOA) since the coffecient of $\lambda_j$ in $\lambda^{(H^{(j)})}$ is negative (and thus not
a dominant integral weight). So for $V(k,0)$-modules, the $\Delta$-action can move (strong) modules to weak modules.

This paper grew out of the second author's summer research experience for undergraduates 
(REU) mentored by the first author and Yi-Zhi Huang at Rutgers University during the summer of 2007. 

It is interesting to note that Li's $\Delta$-operators also allow one to create new intertwining operators from old ones (\cite{li-affine}, Proposition 2.12). In fact, since the $\Delta$-operators are invertible, they give isomorphisms between spaces of intertwining operators. Therefore, using the results above, one can obtain symmetries of fusion rules.
This is the topic of a future project of the authors with Sjuvon Chung and Yi-Zhi Huang.

The authors would like to thank Yi-Zhi Huang for his encouragement and advice throughout this project. 

The contents of the paper are organized as follows:

In the second section, we begin by reviewing the definition of a vertex operator algebra and
its modules. Then, we set up all of the necessary notation related to finite dimensional simple Lie algebras and untwisted affine Lie algebras and conclude by introducing Li's $\Delta$-operators and performing some preliminary calculations. 

The third section illustrates our results in the most basic case -- that of $\hat{\g} = \widehat{sl_2}$. The fourth section tackles the general case where we must consider the 
effects of the $\Delta$-operators one type at a time. In the course of deriving these results,
we need to perform some tedious Weyl group calculations. The calculations for types $B_\ell$,
$C_\ell$, and $D_\ell$ are located in an appendix (the fifth section). The appendix also
contains a summary of results for types $E_6$ and $E_7$. The calculations for types $E_6$
and $E_7$ were performed in Maple using a modified version of a worksheet developed by 
the first author for another project \cite{cms}.

\section{Definitions and Background}

We will begin by reviewing the definition of a vertex operator algebra and its modules.
Further details can be found in \cite{ll-book}.

\subsection{Vertex Algebras}

A {\em vertex algebra} is a complex vector space $V$ equipped with a linear map,
\begin{eqnarray}
Y(\cdot,x):V & \rightarrow & (\End V)[[x,x^{-1}]] \notag \\
           v & \mapsto     & Y(v,x) = \sum_{n \in \Z} v_n x^{-n-1} \notag
\end{eqnarray}
and a distinguished vector $\1 \in V$ (the \emph{vacuum vector})
such that for $u,v\in V$, $u_n v=0$ for $n$ sufficiently large. The operator $u_n$
is called the \emph{$n^{th}$-mode} of $u$.

It is assumed that the vacuum vector behaves like an identity in that 
$Y(\1,x)=\mathrm{Id}_V$ and for $v \in V$
\[ Y(v,x)\1 \in V[[x]] \quad \mathrm{and} \quad 
   \lim_{x \rightarrow 0} Y(v,x)\1 = v \qquad \mathrm{(the\;creation\;axiom)}. \]

Finally, we also must require that the \emph{Jacobi Identity} holds:
for all $u,v \in V$
\begin{eqnarray}
& &x_0^{-1} \delta\left( \frac{x_1 - x_2}{x_0} \right) Y(u,x_1)Y(v,x_2) - 
   x_0^{-1} \delta\left( \frac{x_2 - x_1}{-x_0} \right)
   Y(v,x_2)Y(u,x_1) \nonumber\\
& &\ \ \ \ \ \ 
= x_2^{-1} \delta\left( \frac{x_1 - x_0}{x_2} \right) Y(Y(u,x_0)v,x_2) 
\end{eqnarray}   
Please note that $\delta(x) = \sum_{n \in \Z} x^n$ is the \emph{formal delta function}
and we adopt the binomial expansion convention, namely, $(x+y)^n$ should be expanded
in non-negative powers of $y$.

The vertex algebras that we will consider have additional structure making them
vertex operator algebras. A {\em vertex operator algebra} is a vertex algebra 
$V$ with the following additional data:

$V$ is a $\Z$-graded vector space $V = \coprod_{n \in \Z} V_{(n)}$ (over $\C$) 
such that $\dim V_{(n)}<\infty$ for all $n\in \Z$ and
$V_{(n)} = 0$ for $n$ sufficiently negative. 

Elements of $V_{(n)}$ are said to have \emph{conformal weight} $n$.
The vacuum must have conformal weight 0 (e.g. $\1 \in V_{(0)}$).

$V$ has a second distinguished vector $\omega \in V_{(2)}$ (the \emph{conformal vector}),
where
\begin{equation}
Y(\omega,x) = \sum_{n \in \Z} \omega_n x^{-n-1} = \sum_{n \in \Z} L(n) x^{-n-2} \notag
\end{equation}
The modes of the conformal vector satisfy the \emph{Virasoro} relations: 
\begin{equation} 
[L(m),L(n)] = (m-n)L(m+n) + \frac{m^3-m}{12} \delta_{m+n,0} \, c_V 
            \quad \mathrm{for\;} m,n \in \Z.   
\end{equation}
The scalar $c_V \in \C$ is called the \emph{central charge} (or \emph{rank}) of $V$.
Finally, we must require that
\begin{eqnarray} 
  & & Y(L(-1)v,x) = \frac{d}{dx} Y(v,x) \;\;\; \mbox{for} \; v\in V,\\ \notag
  & & V_{(n)} = \{ v \in V \,|\, L(0)v = nv \} \;\;\; \mbox{for} \; n \in \Z. \notag
\end{eqnarray}

Let $V$ be a vertex algebra. A $V$-module is a complex vector space $W$ equipped
with a linear map
\begin{eqnarray}
Y_W(\cdot,x): V & \rightarrow & (\End W)[[x,x^{-1}]] \notag \\
              v & \mapsto     & Y(v,x) = \sum_{n \in \Z} v_n x^{-n-1} \notag
\end{eqnarray}
such that for $v\in V$ and $w \in W$, $v_n w=0$ for $n$ sufficiently large.
Also, $Y_W(\1,x)=\mathrm{Id}_W$ and the Jacobi identity holds.

If $V$ is a vertex operator algebra, we say $W$ is a (strong) $V$-module if $W$ is a module for $V$ thought of as a vertex algebra and in addition $W$ is
a $\C$-graded vector space $W = \coprod_{n \in \Z} W_{(n)}$
where $W_{(n)} = \{ w \in W \,|\, L(0)w = nw \}$ such that for all $n \in \C$
$\dim W_{(n)} < \infty$ and $W_{(n+r)} = 0$ for $r$ sufficiently negative.

If $W$ is a vertex algebra module for a vertex operator algebra $V$, we say
that $W$ is a \emph{weak} $V$-module.

\subsection{Affine Lie Algebras}

Following \cite{hum} and \cite{kac-book}, we now establish some notation and
review some basic definitions and facts concerning (untwisted) affine Lie algebras.

Let $\g$ be a finite dimensional simple Lie algebra of rank $\ell$ (over $\C$). 
Fix a Cartan subalgebra $\h \subset \g$, and let $\langle   \cdot,\cdot\rangle $ be the standard
form such that $\|\alpha\|^2=\langle   \alpha,\alpha\rangle =2$ for any long root $\alpha$. 
Let $\Delta$ be the set of roots of $\g$. Fix a set of simple roots 
$\{\alpha_1, \dots, \alpha_{\ell} \}$, simple coroots $\{ H_1, \dots, H_{\ell} \}$,
and Chevalley generators $\{E_i,F_i,H_i\,|\,i=1,\dots,\ell\}$.
Recall that $\alpha_j(H_i) = a_{ij}$ where $C=(a_{ij})_{i,j=1}^\ell$ is
the Cartan matrix of $\g$
Let $\Delta^+ \subset \Delta$ be the system of positive roots corresponding to the
$\alpha$'s. We have the triangular decomposition:
\begin{eqnarray}
   \g = \g_+ \oplus \h \oplus \g_- \qquad \mathrm{where} \qquad 
   \g_\pm = \sum_{\alpha \in \Delta^+} \g_{\pm \alpha} \notag 
\end{eqnarray}

Let $\{\lambda_1,\dots,\lambda_{\ell}\} \subset \h^*$ be the fundamental weights 
and let $\{ H^{(1)}, \dots, H^{(\ell)} \}$ be the fundmental coweights. 
Of course, the fundamental weights are dual to the simple coroots
(i.e. $\lambda_i(H_j)=\delta_{ij}$) and the fundamental coweights are
dual to the simple roots (i.e. $\alpha_i(H^{(j)})=\delta_{ij}$). We should also
note that $H_i = \sum_{i=1}^{\ell} a_{ji}H^{(j)}$.

Let $Q$ and $P$ denote the root lattice and weight lattice of $\g$, respectively: 
\[ Q = \Z \alpha_1 + \cdots + \Z \alpha_{\ell} \quad \mathrm{and} \quad
   P = \Z \lambda_1 + \cdots + \Z \lambda_{\ell} \]
and let $Q^\vee$ and $P^\vee$ denote the coroot lattice and coweight lattice of 
$\g$, respectively:
\[ Q^\vee = \Z H_1 + \cdots + \Z H_{\ell} \quad \mathrm{and} \quad
   P^\vee = \Z H^{(1)} + \cdots + \Z H^{(\ell)}. \]
Define the set of \emph{dominant integral weights} by:
\[ P_+ = \{\lambda \in \h^* | \lambda(H_i) \in \Z_{\ge 0}, \ 1 \le i \le \ell\} \]

For $\lambda \in \h^*$, define the \emph{Verma module} of highest weight $\lambda$ by:
\begin{eqnarray}
V(\lambda) = U(\g) \otimes_{U(\h \oplus \g_+)} \C_\lambda,
\end{eqnarray}
where $\C_\lambda$ is a $1$-dimensional $\h \oplus \g_+$-module given by:
\begin{equation*} \begin{array}{cccc}
h \cdot 1 = \lambda(h) & \qquad \mathrm{for\;all} \qquad & h \in \h  & \qquad \mathrm{and}\\
g \cdot 1 = 0          & \qquad \mathrm{for\;all} \qquad & \ g \in \g_+. & 
\end{array} \end{equation*}
Let $J(\lambda)$ be the maximal proper submodule of $V(\lambda)$. Then 
$L(\lambda) = V(\lambda)/J(\lambda)$ is an irreducible (highest weight) $\g$-module. 
Of course, $L(\lambda)$ is finite dimensional if and only if $\lambda \in P_+$.

Let $\alpha$ be a root, then the map $\sigma_\alpha : \h^* \rightarrow \h^*$ defined by 
\[ \sigma_{\alpha}(\lambda) 
     = \lambda - \frac{2\langle    \alpha,\lambda \rangle }{\|\alpha\|^2}\alpha 
     = \lambda - \frac{2\langle    \alpha,\lambda \rangle }{\langle   \alpha,\alpha\rangle }\alpha \]
is called the \emph{reflection} associated with $\alpha$ (notice that 
$\sigma_\alpha(\alpha)=-\alpha$ and $\sigma_\alpha^2 = \mathrm{Id}_{\h^*}$). 
The group $W$ generated by these reflections is called the \emph{Weyl group} of $\g$. 
Let $\sigma_i = \sigma_{\alpha_i}$ for each simple root $\alpha_i$. The $\sigma_i$'s 
are called \emph{simple reflections}. It is well known that $W$ is generated by simple
reflections. The elements of $W$ are isometries (with respect to the standard 
form) of $\h^*$. Note: We can (and do) transport the action of $W$ on $\h^*$ to an action 
on $\h$ using the standard form.

The (untwisted) affine Lie algebra associated with $\g$ is given by
\begin{eqnarray}
\hat{\g} = \g \otimes \C [t,t^{-1}]\oplus \C c,
\end{eqnarray}
where for $a,b \in \g$, $m,n \in \Z$,
\begin{eqnarray}
 & & [a\otimes t^m, b\otimes t^n] = [a,b] \otimes t^{m+n} + m \langle   a,b\rangle \delta_{m+n,0} c,\\ \notag
 & & \hspace{2cm} \ \ \ \ \  [\hat{\g},c]=0. \notag
\end{eqnarray}

For $a \in \g, n \in \Z$, let $a(n)$ denote the action of $a\otimes t^{n} \in \hat{\g}$ on a $\hat{\g}$-module.
Let $\theta = \sum_{i=1}^\ell a_i\alpha_i$ be the highest long root of $\g$, and choose (non-zero) vectors 
$E_{\theta} \in \g_{\theta}$ and $F_{\theta} \in \g_{-\theta}$ such that $\langle    E_{\theta}, F_{\theta}\rangle =1$. 
Let $H_{\theta} = [ E_{\theta},F_{\theta}]$.  Define 
\begin{equation} \label{chev-gen0}
   e_0 = F_{\theta} \otimes t, \qquad 
   f_0 = E_{\theta} \otimes t^{-1}, \qquad \mathrm{and} \qquad
   h_0 = [e_0,f_0] 
\end{equation}
and for $i = 1 \cdots \ell$ define:
\begin{equation} 
   e_i = E_i \otimes 1, \qquad
   f_i = F_i \otimes 1, \qquad \mathrm{and} \qquad
   h_i = H_i \otimes 1. 
\end{equation}
Then, by \cite{kac-book},
$\{ e_i, f_i, h_i \,|\, 0 \leq i \leq \ell\}$ is a set of Chevalley generators for $\hat{\g}$.

Let $\hat{\g}_{>0} = \g \otimes t\C[t]$ and $\hat{\g}_0 = \g \otimes 1 \oplus \C c$.
Also, let $\hat{\g}_{\ge 0} = \hat{\g}_{>0} \oplus \hat{\g}_0 = \g \otimes \C[t] \oplus \C c$
and fix a scalar $k \in \C$. We can make $L(\lambda)$ into a $\hat{\g}_{\ge 0}$-module 
by extending the action of $\g = \g \otimes 1$ as follows:
\begin{equation*} \begin{array}{cccc}
c \cdot v = kv & \qquad \mathrm{for\;all} \qquad & v \in L(\lambda)  & \qquad \mathrm{and}\\
x \cdot v = 0  & \qquad \mathrm{for\;all} \qquad & \ x \in \g_{>0}. & 
\end{array} \end{equation*}
We now induce up to a $\hat{\g}$-module:
\[ V(k,\lambda) = U(\hat{\g}) \otimes_{U(\hat{\g}_{\ge 0})} L(\lambda). \]
$V(k,\lambda)$ is a \emph{generalized Verma module}.

Unless $k$ is the negative of the dual Coxeter number of $\g$, $V(k,0)$ has the structure of a vertex operator algebra. For such $k$, $V(k,\lambda)$ is a weak $V(k,0)$-module for
all $\lambda \in \h^*$. Moreover, $V(k,\lambda)$ is a (strong) $V(k,0)$-module 
if (and only if) $\lambda \in P_+$.

Let $J(k,\lambda)$ be the maximal proper submodule of $V(k,\lambda)$, and define 
\[ L(k,\lambda) = V(k,\lambda)/J(k,\lambda). \]
If $k$ is not the negative of the dual Coxeter number of $\g$, then $L(k,0)$ has the structure 
of a simple vertex operator algebra, and each $L(k,\lambda)$ is an irreducible weak (unless $\lambda \in P_+$) $V(k,0)$-module. 

If $k$ is a positive integer, then
\begin{thm} \cite{dl}
   $L(k,\lambda)$ is an $L(k,0)$-module if and only if $\langle    \lambda, \theta \rangle  \le k$, 
   where $\theta$ is the highest long root of $\g$.
\end{thm}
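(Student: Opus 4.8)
The plan is to exhibit $L(k,\lambda)$ as a module by realizing it as a quotient of $V(k,\lambda)$ and checking the single nontrivial relation coming from the vertex algebra structure of $L(k,0)$, namely the one imposed by the maximal submodule $J(k,0)$. Recall that $V(k,0)$ is generated by the vacuum together with $\hat\g$, and that the vertex operator $Y$ on $V(k,0)$ is determined by $Y(a(-1)\1,x)=\sum_n a(n)x^{-n-1}$ for $a\in\g$. When $k$ is a positive integer, $J(k,0)$ is known to be generated by the singular vector $v=f_0^{k+1}\1=(E_\theta(-1))^{k+1}\1$ (an $\widehat{\mathfrak{sl}_2}$-type argument inside the $\theta$-copy of $\mathfrak{sl}_2$). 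Thus a weak $V(k,0)$-module $W$ descends to an $L(k,0)$-module precisely when the corresponding field $Y_W(v,x)$ vanishes on $W$, i.e. when all modes $v_n$ act as zero. So the statement reduces to: $v_n$ acts as $0$ on every $L(k,\lambda)$ if and only if $\langle\lambda,\theta\rangle\le k$.

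First I would reduce to the $\mathfrak{sl}_2$ attached to $\theta$. Let $\mathfrak{s}=\C E_\theta\oplus\C H_\theta\oplus\C F_\theta$; this is a copy of $\mathfrak{sl}_2$, and $\hat{\mathfrak{s}}$ sits inside $\hat\g$ with the same central element $c$ acting by $k$. The singular vector $v$ lives in (the $\hat{\mathfrak{s}}$-submodule generated by) the vacuum, so $Y_W(v,x)=0$ is a condition purely about the $\hat{\mathfrak{s}}$-action. Restricting $L(k,\lambda)$ to $\hat{\mathfrak{s}}$, it is an integrable (hence completely reducible) module, and whether $v_n=0$ on it depends only on which level-$k$ integrable $\hat{\mathfrak{s}}$-highest weights occur. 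For $\widehat{\mathfrak{sl}_2}$ one checks directly that $v_n$ annihilates $L_{\widehat{\mathfrak{sl}_2}}(k,m)$ for $0\le m\le k$ but not for $m>k$ (the offending vector being a highest-weight vector of $\mathfrak{s}$-weight $m$ fed to the mode that would require lowering more than $k$ times — this is the classical $\widehat{\mathfrak{sl}_2}$ computation). Hence $Y_W(v,x)=0$ on $L(k,\lambda)$ iff every $\mathfrak{s}$-weight appearing in $L(\lambda)$ (equivalently the top space, since the higher graded pieces only produce smaller $\theta$-eigenvalues after the $\hat{\mathfrak{s}}_{>0}$-action is accounted for) is at most $k$; the largest such weight is $\langle\lambda,\theta\rangle$ since $\theta$ is the highest root and $L(\lambda)$ has highest weight $\lambda$. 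This gives the ``only if'' direction and, run backwards, the ``if'' direction.

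For the converse more carefully: if $\langle\lambda,\theta\rangle\le k$ then by the above $Y_{L(k,\lambda)}(v,x)=0$, so the $V(k,0)$-action on $L(k,\lambda)$ factors through $L(k,0)=V(k,0)/J(k,0)$ as vertex algebras; since $L(k,\lambda)$ is already a $\C$-graded weak $V(k,0)$-module with finite-dimensional graded pieces bounded below (it is an integrable highest-weight $\hat\g$-module), it is a genuine $L(k,0)$-module. Conversely, if $L(k,\lambda)$ were an $L(k,0)$-module then $Y(v,x)$ must act as zero on it (as $v\in J(k,0)$ maps to $0$ in $L(k,0)$), forcing $\langle\lambda,\theta\rangle\le k$.

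I expect the main obstacle to be the precise claim that $J(k,0)$ is generated, as a $\hat\g$-ideal (module), by the single vector $(E_\theta(-1))^{k+1}\1$, and the bookkeeping needed to see that checking $Y_W(v,x)=0$ really only involves the $\hat{\mathfrak{s}}$-action and only the top-degree $\theta$-weights of $L(k,\lambda)$. Both facts are standard (due to Frenkel–Zhu / Dong–Lepowski–Mandelstam-type arguments and the structure theory of integrable modules), so in the writeup I would cite them rather than reprove them, and devote the actual work to the clean $\widehat{\mathfrak{sl}_2}$ mode computation showing exactly when $v_n$ annihilates $L_{\widehat{\mathfrak{sl}_2}}(k,m)$.
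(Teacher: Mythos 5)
This theorem is quoted in the paper from Dong--Lepowsky \cite{dl} without proof, so there is no in-paper argument to compare against; what you have written is the standard proof of this classical fact, and in outline it is sound. The reduction to the single singular vector $v=f_0^{k+1}\1=(E_\theta(-1))^{k+1}\1$ is correct (for dominant integral $k\Lambda_0$ the maximal submodule of the Verma module is generated by the $f_i^{\langle k\Lambda_0,h_i\rangle+1}v_{k\Lambda_0}$, and all of these except $i=0$ already vanish in the generalized Verma module $V(k,0)$), the identification of the associated field with $E_\theta(x)^{k+1}$ (no normal ordering needed, since the modes $E_\theta(m)$ commute), and the split into a top-level computation for necessity and an integrability/complete-reducibility argument over $\hat{\mathfrak{s}}\cong\widehat{\mathfrak{sl}_2}$ for sufficiency, are all the standard route.

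One justification you give is wrong, though, and you should repair it before writing this up: the parenthetical claim that ``the higher graded pieces only produce smaller $\theta$-eigenvalues'' is false. For instance $E_\theta(-1)v_\lambda$ lies one conformal level down and has $\h$-weight $\lambda+\theta$, whose pairing with $\theta$ is $\langle\lambda,\theta\rangle+2$. The correct reason the top level suffices for the ``only if'' direction is different: since the $E_\theta(m)$ all commute, the degree-preserving mode of $E_\theta(x)^{k+1}$ acts on the lowest conformal weight space $L(\lambda)$ as $E_\theta(0)^{k+1}$ (reorder so positive modes sit on the right and kill $L(\lambda)$), and $\mathfrak{sl}_2$-theory for the $\theta$-triple shows $E_\theta(0)^{k+1}\ne0$ on $L(\lambda)$ whenever $\langle\lambda,\theta\rangle\ge k+1$ (apply it to $F_\theta(0)^{\langle\lambda,\theta\rangle}v_\lambda$). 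Also note that in that direction you cannot invoke complete reducibility of the restriction to $\hat{\mathfrak{s}}$, since $L(k,\lambda)$ is not integrable when $\langle\lambda,\theta\rangle>k$; reserve that tool for the ``if'' direction, where $L(k,\lambda)$ is integrable of level $k$ over $\hat{\mathfrak{s}}$ and the classical $\widehat{\mathfrak{sl}_2}$ fact that $e(x)^{k+1}$ annihilates every level-$k$ integrable module finishes the argument.
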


\subsection{Li's $\Delta$-Operators}

Now, let Let $H \in P^{\vee}$. Set 
\begin{eqnarray}
\Delta(H,x) = x^{H(0)}\exp \left(\sum_{n\ge 1}\frac{H(n)}{-n}(-x)^{-n}\right)
\end{eqnarray}
(recall the $H(n)$ is the action of $H \otimes t^n$ on a $\hat{g}$-module).

Note that $\Delta(H,x)$ enjoys the following properties:
\begin{eqnarray}
\Delta(H_1 + H_2,x) = \Delta(H_1,x) \Delta(H_2,x),\\
& &\hspace{-5cm} \Delta(0,x) = \mathrm{Id}.
\end{eqnarray}

We fix the notation $(L(k,\lambda)^{(H)}, Y^{(H)}_{L(k,\lambda)}(\cdot,              x)) 
                   = (L(k,\lambda),       Y_{L(k,\lambda)}     (\Delta(H,x) \cdot , x))$.

For $v \in L(k,0)$, set
\begin{equation}
Y_{L(k,\lambda)}^{(H)}(v,x) = Y_{L(k,\lambda)}(\Delta(H,x)v,x) 
                            = \sum_{n \in \Z} v^{(H)}(n) x^{-n-1}. 
\end{equation}

Let us take care of some preliminary calculations involving $\Delta (H,x)$. 
For $g \in {\g}_{\beta}$, $\beta \in \Delta \cup \{0\}$, so that $[h,g] = \beta(h)g$ for all $h \in \h$ 
(Here, ${\g}_0 = \h$), we have:
\begin{eqnarray}
[h(m),g(-1)] & = & [h,g](m-1)\1 + m \langle    h,g \rangle  (m-1)\1 + \delta_{m-1,0}k\1 \\
             & = & \beta (h)g(m-1)\1 + \langle    h,g \rangle  \delta_{m,1}k\1\\
             & = & \begin{cases} \beta (h)g(-1)\1                    & m=0 \\ 
                                 \langle    h,g \rangle  k\delta_{m,1}\1 & m>0 \end{cases}.
\end{eqnarray}
Note that $g(m-1)\1=0$ for $m>0$ by the creation axiom.

We also have (for any $g \in \g$):
\begin{eqnarray*}
  \left( \sum_{m \geq 1} \frac{H(m)}{-m} (-x)^{-m} \right)g = \langle   H,g\rangle  k \1 x^{-1}. 
\end{eqnarray*}
and for $n \ge 2$ 
\begin{equation}
\left(\sum_{m \geq 1}\frac{H(m)}{-m} (-x)^{-m} \right)^n g = 
k\langle   H,g\rangle  x^{-1} \left( \sum_{m \geq 1}\frac{H(m)}{-m}(-x)^{-m} \right)^{n-1} {\bf 1} = 0
\end{equation}
(again using the creation axiom, $H(m)\1=0$ for $m \geq 1$).

Therefore,
\begin{eqnarray}
\Delta(H,x)g & = & x^{H(0)}g + x^{H(0)} \langle    H,g \rangle  k\1 x^{-1}\\
& = & x^{\beta(H)}g + \langle    H,g \rangle  k \1 x^{-1}.
\end{eqnarray}

Applying this to our vertex operator map, we have:
\begin{eqnarray}
Y^{(H)}(g,x) & = & Y(\Delta(H,x)g ,v)\\
& = & x^\beta(H)Y(g,x) + \langle    H,g \rangle  k(\mathrm{Id})x^{-1}.
\end{eqnarray}

Let $k$ be a positive integer and $\lambda \in P_+$ (a dominant integral weight).
In addition, assume that $\langle    \lambda, \theta \rangle  \leq k$ so that
$L(k,\lambda)$ is an (irreducible) $L(k,0)$-module.
It was proved in (\cite{li-affine}, Proposition 2.9) that $(L(k,\lambda)^{(H)}, Y^{(H)}_{L(k,\lambda)})$ carries the structure of an irreducible (weak) $L(k,0)$-module. 
However, since $L(k,0)$ is a regular vertex operator algebra, its weak modules are in
fact (strong) modules. So we have that
\begin{prop} There exists a (unique) $\lambda^{(H)} \in P_+$ such that 
$\langle    \lambda^{(H)}, \theta \rangle  \leq k$ and
$(L(k,\lambda))^{(H)} \cong L(k,\lambda^{(H)})$ as $L(k,0)$-modules.
\end{prop}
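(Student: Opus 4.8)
The plan is to reduce the statement to three already-available facts: that $(L(k,\lambda)^{(H)}, Y^{(H)})$ is an irreducible weak $L(k,0)$-module (Li, \cite{li-affine}, Proposition 2.9), that $L(k,0)$ is regular so every weak module is a finite direct sum of irreducible (strong) modules, and the classification (Theorem of \cite{dl}) of irreducible $L(k,0)$-modules as exactly the $L(k,\mu)$ with $\mu \in P_+$ and $\langle \mu, \theta \rangle \le k$. Combining the first two gives that $L(k,\lambda)^{(H)}$ is an irreducible (strong) $L(k,0)$-module, hence isomorphic to $L(k,\mu)$ for a unique such $\mu$; we then set $\lambda^{(H)} = \mu$. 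So the real content is only to make the passage ``irreducible weak $\Rightarrow$ irreducible strong'' precise, and to justify uniqueness.

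First I would recall the definition of a regular vertex operator algebra and cite the result (from the regularity literature, e.g. \cite{dl} together with the standard structure theory) that for a regular VOA $V$, every weak $V$-module is completely reducible as a direct sum of irreducible (strong) $V$-modules, and that $L(k,0)$ for $k$ a positive integer is regular. Applying this to the weak module $W := L(k,\lambda)^{(H)}$ furnished by Li's Proposition 2.9: $W$ decomposes as $\bigoplus_i L(k,\mu_i)$ with each $\mu_i \in P_+$, $\langle \mu_i,\theta\rangle \le k$. But Li's proposition also asserts $W$ is \emph{irreducible}, so the sum has a single term: $W \cong L(k,\mu)$ for some $\mu \in P_+$ with $\langle \mu,\theta\rangle \le k$.

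For uniqueness, I would invoke that non-isomorphic irreducible $L(k,0)$-modules $L(k,\mu)$ and $L(k,\mu')$ are distinguished by the action of $L(0)$ on their lowest conformal weight spaces together with the $\g$-module structure of that space (equivalently, the lowest weight space of $L(k,\mu)$ is the irreducible $\g$-module $L(\mu)$, and $\mu \ne \mu'$ gives $L(\mu) \not\cong L(\mu')$ as $\g$-modules); hence $\mu$ is determined by the isomorphism class of $W$. Setting $\lambda^{(H)} := \mu$ then yields the claimed unique element of $P_+$ with $\langle \lambda^{(H)},\theta\rangle \le k$ and $L(k,\lambda)^{(H)} \cong L(k,\lambda^{(H)})$.

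The main obstacle is not any hard computation but citing the right form of regularity: one must ensure the statement used is ``every \emph{weak} module over a regular VOA is a finite direct sum of irreducible strong modules'' (not merely ``every strong module is completely reducible''), so that it genuinely upgrades Li's weak module to a strong one. If one prefers to avoid the full strength of regularity, an alternative route is to argue directly that $\Delta(H,x)$ is invertible with inverse $\Delta(-H,x)$ (using the multiplicativity $\Delta(H_1+H_2,x)=\Delta(H_1,x)\Delta(H_2,x)$ and $\Delta(0,x)=\mathrm{Id}$ noted above), so the functor $W \mapsto W^{(H)}$ is an auto-equivalence of the category of weak $L(k,0)$-modules; since $L(k,\lambda)$ is a strong module and strong modules over $L(k,0)$ are exactly the weak modules that are admissible/grading-restricted, one checks the grading restrictions are preserved under the twist (the $\Delta$-operator only shifts $L(0)$ by the inner action of $H(0)$ up to a scalar and a locally nilpotent correction, keeping weight spaces finite-dimensional and bounded below). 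Either way, the identification of \emph{which} $\lambda^{(H)}$ arises — the actual arithmetic with miniscule coweights — is deferred to the later sections and is not needed here; this proposition is purely an existence-and-uniqueness statement, and I expect its proof to be short.
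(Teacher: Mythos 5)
Your proposal is correct and follows essentially the same route as the paper, which likewise combines Li's Proposition 2.9 (the twisted module is an irreducible weak $L(k,0)$-module) with the regularity of $L(k,0)$ to upgrade it to a strong, hence classified, irreducible module $L(k,\mu)$. The paper treats this as an immediate consequence and does not spell out the uniqueness step or the alternative argument via invertibility of $\Delta(H,x)$, but your main line of reasoning matches its proof.
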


Also, Li established that 
\begin{thm} (\cite{li-affine}, Proposition 2.25) For $H \in Q^{\vee}$, $(L(k,\lambda))^{(H)}$ and $L(k,\lambda)$ are isomorphic as $L(k,0)$ modules. 
\end{thm}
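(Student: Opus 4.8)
The plan is to realize $L(k,\lambda)^{(H)}$ as the pullback of $L(k,\lambda)$ along an automorphism of $\hat{\g}$ which, because $H$ is a coroot, is implemented by an invertible operator on the integrable module $L(k,\lambda)$.

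First I would read off how the $\Delta(H,x)$-twist changes the $\hat{\g}$-action. From the preliminary identity $Y^{(H)}(g,x)=x^{\beta(H)}Y(g,x)+\langle H,g\rangle k\,(\mathrm{Id})\,x^{-1}$ for $g\in\g_{\beta}$ ($\beta\in\Delta\cup\{0\}$, $\g_{0}=\h$), extracting the coefficient of $x^{-m-1}$ shows that on the underlying space of $L(k,\lambda)$ the vector $g\otimes t^{m}$ now acts by $g(m+\beta(H))$ when $\beta\neq 0$, while $h\otimes 1$ ($h\in\h$) acts by $h(0)+\langle H,h\rangle k$, and $h\otimes t^{m}$ ($m\neq 0$) and $c$ act unchanged. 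Since $H\in Q^{\vee}$, each $\beta(H)\in\Z$, so these formulas define a linear map $\tau_{H}$ of $\hat{\g}=\g\otimes\C[t,t^{-1}]\oplus\C c$; a routine computation with the affine bracket (the cocycle check reduces to $\langle H,[a,b]\rangle=\beta(H)\langle a,b\rangle$ for $a\in\g_{\beta}$, $b\in\g_{-\beta}$) shows $\tau_{H}$ is a Lie algebra automorphism, and by construction $L(k,\lambda)^{(H)}$ is the pullback $(L(k,\lambda))^{\tau_{H}}$.

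Next I would identify $\tau_{H}$ as the automorphism of $\hat{\g}$ induced by a translation in the affine Weyl group: translation by $H$ moves the affine root space attached to $\beta+m\delta$ to that attached to $\beta+(m+\beta(H))\delta$ and produces the stated central correction on $\h\otimes1$, which is exactly $\tau_{H}$. Because $H\in Q^{\vee}$, this translation already lies in $\widehat{W}=W\ltimes Q^{\vee}$, hence is a product $\sigma_{i_{1}}\cdots\sigma_{i_{N}}$ of affine simple reflections $\sigma_{0},\dots,\sigma_{\ell}$. Writing $\pi$ for the defining action of $\hat{\g}$ on $L(k,\lambda)$, each $\sigma_{i}$ is implemented by the invertible operator $r_{i}=\exp(\pi(e_{i}))\exp(-\pi(f_{i}))\exp(\pi(e_{i}))$ — this makes sense because $e_{i},f_{i}$ are real-root vectors of $\hat{\g}$ and therefore act locally nilpotently on the integrable module $L(k,\lambda)$ — and conjugation by $r_{i}$ carries $\pi$ to $\pi$ composed with the automorphism induced by $\sigma_{i}$. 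Hence, with $w_{H}=r_{i_{1}}\cdots r_{i_{N}}$, we get $w_{H}\,\pi(X)\,w_{H}^{-1}=\pi(\tau_{H}(X))$ for all $X\in\hat{\g}$.

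It then follows that $\Phi:=w_{H}^{-1}$ is a linear automorphism of the space $L(k,\lambda)$ satisfying $\Phi\circ Y^{(H)}(g(-1)\1,x)=Y(g(-1)\1,x)\circ\Phi$ for every $g\in\g$; since $L(k,0)$ is generated by the vectors $g(-1)\1$, $\Phi$ intertwines $Y^{(H)}$ and $Y$ on all of $L(k,0)$, so $\Phi$ is an isomorphism $L(k,\lambda)^{(H)}\xrightarrow{\ \sim\ }L(k,\lambda)$ of $L(k,0)$-modules. (Using $\Delta(H_{1}+H_{2},x)=\Delta(H_{1},x)\Delta(H_{2},x)$ and the functoriality of the twist one may first reduce to $H=H_{i}$, a simple coroot, which shortens the bookkeeping but does not change the argument.) I expect the main obstacle to be the identification in the third paragraph: matching the explicitly-defined $\tau_{H}$ with an element of $\widehat{W}$ acting on $\hat{\g}$, and invoking that such elements act on integrable modules by honest, locally nilpotent operators. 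This is the one place the hypothesis $H\in Q^{\vee}$ is essential; for $H\in P^{\vee}\setminus Q^{\vee}$ the relevant translation lies only in the extended affine Weyl group $P^{\vee}\rtimes W$, where it acquires a nontrivial diagram-automorphism part and the twisted module becomes a genuinely different $L(k,\mu)$ — the case handled in the rest of the paper.
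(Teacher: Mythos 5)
This theorem is quoted from Li (\cite{li-affine}, Proposition 2.25); the paper supplies no proof of its own, so there is nothing internal to compare your argument against. Your overall strategy --- read the twisted $\hat{\g}$-action off of $Y^{(H)}(g,x)=x^{\beta(H)}Y(g,x)+\langle H,g\rangle k\,x^{-1}$, recognize it as the pullback $\pi\circ\tau_H$ along a Lie algebra automorphism covering an affine Weyl group translation, and implement that translation on the integrable module by products of $\exp(\pi(e_i))\exp(-\pi(f_i))\exp(\pi(e_i))$ --- is the standard route to this result, and the mode computation, the cocycle check that $\tau_H\in\Aut\hat{\g}$, and the final generation argument (a linear isomorphism commuting with all $g(n)$, $g\in\g$, is automatically an $L(k,0)$-module map) are all sound.

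There is, however, one step that is false as literally stated: the identity $w_H\,\pi(X)\,w_H^{-1}=\pi(\tau_H(X))$. What conjugation by $r_{i_1}\cdots r_{i_N}$ actually implements is $\pi\circ\tilde w$, where $\tilde w=\tilde r_{i_1}\cdots\tilde r_{i_N}$ with $\tilde r_i=\exp(\mathrm{ad}\,e_i)\exp(-\mathrm{ad}\,f_i)\exp(\mathrm{ad}\,e_i)$. This $\tilde w$ restricts to the same affine Weyl group element as $\tau_H$ on the Cartan and permutes the root spaces the same way, but it is only a lift of that Weyl group element: $\tilde w^{-1}\tau_H$ is in general a nontrivial automorphism $\phi$ fixing $\hat\h$ pointwise and rescaling each root space $\g_\beta\otimes t^m$. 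So $w_H^{-1}$ intertwines $\pi^{(H)}$ with $\pi\circ\phi$, not with $\pi$. The gap is easily closed --- since $\phi$ preserves each $\hat\g_\gamma$, the highest weight vector of $L(k,\lambda)$ remains a highest weight vector of the same weight for $\pi\circ\phi$, and irreducibility (guaranteed by Li's Proposition 2.9, which the paper does quote) gives $\pi\circ\phi\cong\pi$; alternatively $\phi=\exp(\mathrm{ad}\,h)$ for a suitable element $h$ of the affine Cartan and is implemented by $\exp(\pi(h))$ --- but some such argument must be supplied. Two smaller points to tidy up: the affine Weyl group naturally acts on $\hat\h^*\ni\delta$, so you should adjoin the degree operator (acting via $-L(0)$ up to a shift) before speaking of ``the automorphism induced by a translation''; and with the usual conventions $\tau_H$ covers translation by $-\nu(H)$ rather than $+\nu(H)$, which is harmless since $Q^\vee$ is a group.
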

That is, in our notation, $\lambda^{(H)} = \lambda$ when $H \in Q^{\vee}$.

Our objective is to see what happens when we allow $H$ to be any element of $P^{\vee}$
(not just $Q^{\vee}$).

Because $\Delta(H'+H'',x) = \Delta(H',x)\Delta(H'',x)$, we have the following:
\begin{eqnarray*}
L(k,\lambda^{(H'+H'')}) & \cong & L(k,\lambda)^{(H' + H'')}\\
                        &     = & (L(k,\lambda)^{(H')})^{(H'')}\\
                        & \cong & L(k,\lambda^{(H')})^{(H'')}\\
                        & \cong & L(k,(\lambda^{(H')})^{(H'')})\\
\end{eqnarray*}
Therefore, $\lambda^{(H'+H'')} = (\lambda^{(H')})^{(H'')}$.

The coweights $\{ H^{(1)},\dots,H^{(\ell)} \}$ form a basis for $P^{\vee}$ and so, by the observation above, all we need to find is action of $\Delta(H^{(i)},x)$. Let us fix the
notation $(L(k,\lambda))^{(i)} = (L(k,\lambda))^{(H^{(i)})}$ and $\lambda^{(i)} = \lambda^{(H^{(i)})}$ for 
$i=1,\dots,\ell$ that is $L(k,\lambda^{(i)}) \cong (L(k,\lambda))^{(i)}=(L(k,\lambda))^{(H^{(i)})}$.

Actually, since the operators associated with coroots act trivially, we only need to
consider one representative for each coset of $P^\vee / Q^\vee$. This pares down the list
of coweights considerably.

We know that the irreducible $L(k,0)$-modules are precisely the irreducible integrable
highest weight modules (i.e. standard modules) for $\hat{g}$ (\cite{dl}).
Now, an irreducible highest weight module is completely determined by its 
highest weight. Therefore, if we can locate a highest weight vector in $(L(k,\lambda))^{(i)}$ and measure its weight, we have determined $\lambda^{(i)}$ and thus the action of $\Delta(H^{(i)},x)$ on $L(k,\lambda)$.

Now, for our Chevalley generators $e_i, f_i, h_i \ 0 \le i \le \ell$, we define ${e_i}^{(j)}$ to be the action of $e_i$ on $L(k,\lambda^{(j)})$, ${f_i}^{(j)}$ to be the action of $f_i$ on $L(k,\lambda^{(j)})$, and ${h_i}^{(j)}$ to be the action of $h_i$ on $L(k,\lambda^{(j)})$. 
Let us calculate these actions:
\begin{eqnarray*}
Y^{(j)}(H_i,x) & = & Y(\Delta(H^{(j)},x)H_i,x)\\
& = & Y(H_i,x) + \frac{2k}{\| \alpha_i \| ^2} \delta_{i,j} x^{-1}
\end{eqnarray*}
and so we have that 
\begin{equation}
{(H_i)}^{(j)}(0) = H_i(0) + \frac{2k}{\| \alpha_i \| ^2} \delta_{i,j}. \label{hi-delaction}
\end{equation}

For $e_i, \ 1 \le i \le \ell$, we have
\begin{eqnarray*}
Y^{(j)}(E_i,x) & = & Y(\Delta(H^{(j)},x)E_i,x)\\
&=& x^{\alpha_i(H^{(j)})}Y(E_i,x)\\
&=& x^{\delta_{i,j}}Y(E_i,x)
\end{eqnarray*}
and so 
\begin{equation}
{e_i}^{(j)} = {E_i}^{(H^{(j)})}(0) = E_i(\delta_{i,j}). \label{ei-delaction}
\end{equation}

Finally, for $e_0$, we have:
\begin{eqnarray*}
Y^{(j)}(F_\theta ,x) & = & Y(\Delta(H^{(j)},x)F_{\theta} ,x)\\
& = & x^{-\theta(H^{(j)})}Y(F_{\theta} ,x) \\
& = & x^{- a_j} Y(F_\theta ,x),
\end{eqnarray*}
(recall that $\theta = \sum_{i=1}^\ell a_i \alpha_i$) and so 
\begin{equation}
{e_0}^{(j)} = {F_\theta}^{(j)}(1) = F_\theta(1-a_j). \label{e0-delaction}
\end{equation}

\section{The $sl_2(\C)$ case}

Before answering our question for all finite dimensional simple Lie algebras let us
consider the simplest case -- that of $\g = sl_2(\C)$.

Let $\displaystyle{E = \begin{pmatrix} 0 & 1 \\ 0 & 0 \end{pmatrix}}$, 
$\displaystyle{F = \begin{pmatrix} 0 & 0 \\ 1 & 0 \end{pmatrix}}$, and
$\displaystyle{H = \begin{pmatrix} 1 & 0 \\ 0 & -1 \end{pmatrix}}$. We know that $E,F,H$ are Chevalley generators of $sl_2(\C)$.  Let $\alpha$ be the fundamental root of $sl_2(\C)$ (that
is, $\alpha(H)=2$), and $\lambda = \frac{1}{2} \alpha$ be the fundamental weight of $sl_2(\C)$.   
Since $\alpha$ is the only positive root, it is the highest long root -- that is 
$\alpha=\theta$.

Then, $H$ is the coroot corresponding to $\alpha$ and $H^{(1)} = \frac{1}{2}H$ is the coweight corresponding to $\lambda$.

In the $sl_2(\C)$ case, we denote $L(k,n\lambda)$ by $L(k,n)$. Let $k$ be a positive integer
and $n \in \Z$ such that $0 \leq n \leq k$.
\begin{thm} $(L(k,n))^{(1)} \cong L(k,k-n)$\\ 
Moreover, if $v$ is a highest weight vector for $L(k,n)$, then $F(0)^n v$ is a highest weight
vector for $(L(k,n))^{(1)}$ (with weight $\Lambda = (k,(k-n)\lambda)$).
\end{thm}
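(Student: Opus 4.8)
The plan is to exhibit a highest weight vector for $(L(k,n))^{(1)}$ explicitly and read off its weight, then invoke Proposition~2.? (the existence and uniqueness of $\lambda^{(H)} \in P_+$) together with the classification of irreducible $L(k,0)$-modules to conclude the isomorphism. For $sl_2$, the affine Chevalley generators are $e_0 = F \otimes t$, $f_0 = E \otimes t^{-1}$, $e_1 = E \otimes 1$, $f_1 = F \otimes 1$, and since $\theta = \alpha$ has $a_1 = 1$, formulas \eqref{hi-delaction}, \eqref{ei-delaction}, \eqref{e0-delaction} specialize to ${e_1}^{(1)} = E(1)$, ${e_0}^{(1)} = F(0)$, ${h_1}^{(1)}(0) = H(0) + k$, and ${h_0}^{(1)}(0) = c - {h_1}^{(1)}(0) = k - (H(0) + k) = -H(0)$ (using $h_0 + h_1 = c$ acting as $k$). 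So in the new module structure a vector $w$ is a highest weight vector exactly when $E(1)w = 0$ and $F(0)w = 0$, i.e. annihilated by both new raising operators.

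The key computation is then to verify that $w := F(0)^n v$ satisfies these two conditions, where $v$ is the highest weight vector of $L(k,n)$ (so $E(0)v = 0$, $H(0)v = nv$, $c v = kv$, and $F(1)v = 0$ since $v$ lies in the lowest conformal weight space). First, $F(0)^n v$ still lies in the lowest conformal weight space $L(n)$ (the finite-dimensional $sl_2$-module of highest weight $n$), and since that space is annihilated by all of $\g \otimes t\C[t]$, in particular $E(1) F(0)^n v = 0$ — here one uses that $E(1)$ commutes with $F(0)$ modulo $H(1)$ and $c$ terms, and $H(1)$, $c$ act suitably on the lowest weight space; more directly, $F(0)^n v \in L(n) \subset L(k,n)$ and $\hat\g_{>0}$ kills $L(n)$. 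Second, $F(0)^n v$ is a lowest weight vector for the finite-dimensional $sl_2$-action (the bottom of the string $v, F(0)v, \dots, F(0)^n v$), so $F(0) \cdot F(0)^n v = F(0)^{n+1} v = 0$ by the representation theory of the $(n+1)$-dimensional $sl_2$-module $L(n)$. Hence both new raising operators annihilate $w$. The main obstacle is being careful that $E(1)$ (not $E(0)$) really does annihilate $F(0)^n v$: this is where one genuinely uses that $F(0)^n v$ sits in the lowest conformal weight space, which is the conceptual heart of the whole paper.

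Finally, to identify the weight: $w = F(0)^n v$ is nonzero (again by $sl_2$ representation theory, since $0 \le n$ and the string has length $n+1$), and ${h_1}^{(1)}(0) w = (H(0) + k) w = (-n + k) w = (k-n)w$, while the central element still acts by $k$. Thus $w$ is a highest weight vector of affine highest weight $(k, (k-n)\lambda)$ for the new $\hat\g$-structure, and since $0 \le n \le k$ gives $0 \le k-n \le k$, the module $L(k, (k-n)\lambda)$ is indeed an $L(k,0)$-module by Theorem~2.? Since $(L(k,n))^{(1)}$ is an irreducible $L(k,0)$-module (by Li's result, $L(k,0)$ being regular) generated from this highest weight vector, it must be isomorphic to $L(k, k-n) = L(k, (k-n)\lambda)$, completing the proof.
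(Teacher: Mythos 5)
Your proposal is correct and follows essentially the same route as the paper: identify the new Chevalley actions via \eqref{ei-delaction} and \eqref{e0-delaction}, kill $F(0)^{n+1}v$ by finite-dimensional $sl_2$ theory, kill $E(1)F(0)^n v$ by noting $F(0)^n v$ sits in the lowest conformal weight space (the paper phrases this as a conformal-weight count for the operator $E(1)$, which is the same fact as your observation that $\hat\g_{>0}$ annihilates that space), and then compute the new $\h$-weight $(H(0)+k)F(0)^n v=(k-n)F(0)^n v$. The only difference is cosmetic, and your explicit remark that $F(0)^n v\neq 0$ is a detail the paper leaves implicit.
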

\begin{proof}
Since $-\theta=-\alpha$, we have $F = F_\theta$ and so, recalling (\ref{chev-gen0}), we
have $e_0 = F_\theta \otimes t = F \otimes t$. By (\ref{e0-delaction}), we
have that ${e_0}^{(1)}\cdot w = F(0) \cdot w$. Therefore,
\[ {e_0}^{(1)}\cdot F(0)^nv = F(0) \cdot F(0)^nv = F(0)^{n+1} \cdot v = 0 \]
by the representation theory of $sl_2(\C)$.

Next, we have
$${e_1}^{(1)} \cdot F(0)^nv = E(1)F(0)^nv.$$
Since $L(k,n)$ is a vertex operator algebra module, we can consider conformal weights.
Notice that as operators $\wt F(0) = \wt(F) -0 -1 = 0$ and $\wt E(1) = \wt(E) -1-1 = -1$.
This implies that ${e_1}^{(1)} \cdot F(0)^nv$ has a lower (conformal) weight than $v$. 
However, the highest weight vector occupies the lowest (conformal) weight space of $L(k,n)$. 
Thus ${e_1}^{(1)}F(0)^nv = 0$.
Therefore, $F(0)^nv$ is a highest weight vector for $(L(k,n))^{(1)}$.

Now let us determine the weight of $F(0)^nv$. We will use the fact that $[H,F^n] = -2nF^n$
(in $U(sl_2)$). 
\begin{eqnarray*}
(H)^{(1)} \cdot F(0)^nv & = & H(0)F(0)^nv + kF(0)^nv \\
                        & = & F(0)^nH(0)v + [H(0),F(0)^n]v + kF(0)^nv \\
                        & = & nF(0)^nv - 2nF(0)^nv + kF(0)^nv\\
                        & = & (k-n)F(0)^nv.
\end{eqnarray*}
Hence, $(L(k,n))^{(1)} \cong L(k,k-n)$.
\end{proof}

\begin{rem}
The lowest conformal weight space of $L(k,n)$ is a copy of $L(n\lambda)$ (the
finite dimensional $sl_2$-module with highest weight $n\lambda$). The highest
weight vector of $L(k,n)$ is located in this (lowest) conformal weight space. 

Observe that the new highest weight vector for $(L(k,n))^{(1)}$ is also located
in this copy of $L(n\lambda)$. In fact, we can obtain the new highest weight vector
from the old one by applying the (only) reflection in the Weyl group of $sl_2$.
\end{rem}

\section{The General Case}

Recall that, given an irreducible $L(k,0)$-module $W$ and coroot $H$, $W^{(H)}$ and 
$W$ are isomorphic as $L(k,0)$-modules. This, along with the fact that $W^{(H'+H'')}$ 
is isomorphic to $(W^{(H')})^{(H'')}$, implies that we only need to consider one 
representative from each distinct coset of $P^\vee/Q^\vee$. It is well known that 
$H=0$ (which acts as the identity -- $W^{(0)} = W$), along with the miniscule coweights, 
give us a complete set of coset representatives. We give a list of such coweights below:

\begin{center}
\begin{tabular}{|c|c|c|c|c|c|c|c|} \hline 
Type & $A_{\ell}$ & $B_{\ell}$ & $C_{\ell}$ & $D_{\ell}$ & $E_{6}$ & $E_{7}$ \\ \hline
Coweights & $H^{(1)},\dots,H^{(\ell)}$ & $H^{(1)}$ & $H^{(\ell)}$ &  
$H^{(1)}$,  $H^{(\ell-1)}$,  $H^{(\ell)}$ & $H^{(1)}$,  $H^{(6)}$ & $H^{(7)}$ \\ \hline
\end{tabular}
\begin{equation} 
   \label{validCoweights} \mathrm{Table\;} \ref{validCoweights} \notag 
\end{equation}
\addtocounter{equation}{1}
\end{center}

Types $E_8$, $F_4$, and $G_2$ have no miniscule coweights, so the action of $\Delta(H,x)$
is always trivial.

Given any coweight $H$ from this list, we wish to determine $\lambda^{(H)}$ such
that $(L(k,\lambda))^{(H)}$ is isomorphic to $L(k,\lambda^{(H)})$. To do this, we need to identify a highest weight vector for $(L(k,\lambda))^{(H)}$ and then 
measure its weight. We will see that in each case our ``new'' highest weight
vector (for $(L(k,\lambda))^{(H)}$) is located in the lowest (conformal) weight
space. This lowest (conformal) weight space is a copy of $L(\lambda)$ -- the
finite dimensional $\g$-module from which $L(k,\lambda)$ is built. In fact,
our ``new'' highest weight vector can be obtained from our ``old'' highest
weight vector by applying the following Weyl group elements
(For each type $X_\ell$ and coweight $H^{(j)}$, define and element $\sigma_{X}^{(j)}$ as follows):

\begin{center}
\begin{tabular}{|l|r|} \hline
    Type & Weyl Group Element \\ \hline
    $A_{\ell}$ &  $1 \leq j \leq \ell$, \quad $\sigma_{A}^{(j)} = (\sigma_1 \sigma_2 \cdots \sigma_\ell)^j$ \\ \hline
    $B_{\ell}$ &  $\sigma_{B}^{(1)} = \sigma_1 \sigma_2  \cdots \sigma_{\ell-1} \sigma_\ell \sigma_{\ell-1} \cdots \sigma_2 \sigma_1$ \\ \hline
    $C_{\ell}$ &  $\sigma_{C}^{(\ell)} = (\sigma_\ell \sigma_{\ell-1} \cdots \sigma_1) (\sigma_\ell \sigma_{\ell-1} \cdots \sigma_2) \cdots (\sigma_\ell \sigma_{\ell-1}) (\sigma_\ell)$ \\ \hline
    $D_\ell$ &  $\sigma_{D}^{(1)} = \sigma_1 \sigma_2 \cdots \sigma_\ell \sigma_{\ell-2} \sigma_{\ell-3} \cdots \sigma_2 \sigma_1$ \\ \hline
    $\ell$ even &  $\sigma_{D}^{(\ell - 1)} = (\sigma_{\ell-1} \sigma_{\ell-2} \sigma_{\ell-3} \cdots \sigma_2 \sigma_1) (\sigma_{\ell} \sigma_{\ell-2} \sigma_{\ell-3} \cdots \sigma_3 \sigma_2) (\sigma_{\ell-1} \sigma_{\ell-2} \sigma_{\ell-3} \cdots \sigma_3) \cdots (\sigma_{\ell-1}) $ \\ \hline
                &  $ \sigma_{D}^{(\ell)} = (\sigma_\ell \sigma_{\ell-2} \sigma_{\ell-3} \cdots \sigma_2 \sigma_1) (\sigma_{\ell-1} \sigma_{\ell-2} \sigma_{\ell-3} \cdots \sigma_3 \sigma_2) (\sigma_\ell \sigma_{\ell-2} \sigma_{\ell-3} \cdots \sigma_3) \cdots (\sigma_\ell) $\\ \hline
    $\ell$ odd &  $\sigma_{D}^{(\ell -1)} =  (\sigma_{\ell-1} \sigma_{\ell-2} \sigma_{\ell-3} \cdots \sigma_2 \sigma_1) (\sigma_{\ell} \sigma_{\ell-2} \sigma_{\ell-3} \cdots \sigma_3 \sigma_2) (\sigma_{\ell-1} \sigma_{\ell-2} \sigma_{\ell-3} \cdots \sigma_4 \sigma_3) \cdots (\sigma_{\ell}) $\\ \hline
               &  $ \sigma_{D}^{(\ell)} = (\sigma_{\ell} \sigma_{\ell-2} \sigma_{\ell-3} \cdots \sigma_2 \sigma_1) (\sigma_{\ell-1} \sigma_{\ell-2} \sigma_{\ell-3} \cdots \sigma_3 \sigma_2) (\sigma_{\ell} \sigma_{\ell-2} \sigma_{\ell-3} \cdots \sigma_4 \sigma_3) \cdots (\sigma_{\ell -1}) $\\ \hline
    $E_6$ &  $\sigma_{E}^{(1)} = \sigma_1 \sigma_3 \sigma_4 \sigma_2\sigma_5\sigma_4\sigma_3\sigma_1\sigma_6\sigma_5\sigma_4\sigma_2\sigma_3\sigma_4\sigma_5\sigma_6$ \\ \hline
    $E_6$ &  $\sigma_{E}^{(6)} = \sigma_6\sigma_5\sigma_4\sigma_2\sigma_3\sigma_4\sigma_5\sigma_6\sigma_1\sigma_3\sigma_4\sigma_2\sigma_5\sigma_4\sigma_3\sigma_1 $ \\ \hline
    $E_7$ &  $\sigma_{E}^{(7)} = \sigma_7\sigma_6\sigma_5\sigma_4\sigma_3\sigma_2\sigma_4\sigma_5\sigma_6\sigma_7\sigma_1\sigma_3\sigma_4\sigma_5\sigma_6\sigma_2\sigma_4\sigma_5\sigma_3\sigma_4\sigma_1\sigma_3\sigma_2\sigma_4\sigma_5\sigma_6\sigma_7  $ \\ \hline
\end{tabular}
\begin{equation}  
   \label{sigmasDefined} \mathrm{Table\;} \ref{sigmasDefined} \notag 
\end{equation}
\addtocounter{equation}{1}
\end{center}

Let $w$ be a Weyl group element and ${\bf v} \in L(\lambda)_{\mu}$ (a vector of weight $\mu$).
Then $w({\bf v}) \in L(\lambda)_{w(\mu)}$. We begin by calculating the effects of the Weyl group elements defined above on the fundamental weights and simple roots of $\g$. 
To simplify computations, we will use the following conventions:
\begin{equation*} \begin{array}{rclcrcl}
\lambda_0 & = & 0 & \qquad &
\lambda_j & = & \lambda_{j (\mathrm{mod}(l+1))} \\
\alpha_0 & = & -\theta & \qquad &
\alpha_j & = & \alpha_{j (\mathrm{mod}(l+1))}
\end{array} \end{equation*}

We will repeatedly use the fact that $\sigma_i (\lambda_j) = \lambda_j - \delta_{i,j}\alpha_j$.

\subsection{The Weyl group elements' actions}

Let us begin by calculating the action of $\sigma_A^{(1)}$ on the fundamental weights
and then use that to determine the action of our type $A_{\ell}$ Weyl group elements.

For $\lambda_1$, we have: 
$\sigma_1\sigma_2 \cdots \sigma_\ell(\lambda_1) = \sigma_1(\lambda_1) = -\lambda_1+\lambda_2.$

For $\lambda_j$, with $1 < j < \ell$:
\begin{eqnarray*}
 \sigma_1 \sigma_2 \cdots \sigma_\ell (\lambda_j) & = & \sigma_1 \sigma_2 \cdots \sigma_j (\lambda_j)\\
& = & \sigma_1 \sigma_2 \cdots \sigma_{j-1}(\lambda_{j-1} - \lambda_{j} + \lambda_{j+1}) \\
& = & \sigma_1 \sigma_2 \cdots \sigma_{j-2}(\lambda_{j-2} - \lambda_{j-1} + \lambda_{j+1}) \\
& = & \cdots \\
& = & \sigma_1 (\lambda_1 - \lambda_2 + \lambda_{j+1}) \\
& = & -\lambda_1 + \lambda_{j+1}.
\end{eqnarray*}

Finally, for $\lambda_\ell$:
\begin{eqnarray*}
 \sigma_1 \sigma_2 \cdots \sigma_\ell (\lambda_\ell) & = & \sigma_1 \sigma_2 \cdots \sigma_{\ell-1} (\lambda_{\ell-1} - \lambda_\ell)\\
& = &\sigma_1 \sigma_2 \cdots \sigma_{\ell-2} (\lambda_{\ell-2} - \lambda_{\ell-1})\\
& = & \cdots \\
& = & \sigma_1 (\lambda_1 - \lambda_2) \\
& = & -\lambda_1.
\end{eqnarray*}

Adhering to our above convention, we conclude that for $1 \le j \le l$: 
\begin{equation} \label{alambda}
\sigma_1 \sigma_2 \cdots \sigma_\ell (\lambda_j)  = -\lambda_1 + \lambda_{j+1}.
\end{equation}

Therefore, for $1 \leq j < \ell$ (recall that $\lambda_0=0$), 
\begin{eqnarray*}
 \sigma_1 \sigma_2 \cdots \sigma_\ell(\alpha_j) & = & 
 \sigma_1 \sigma_2 \cdots \sigma_\ell(-\lambda_{j-1} + 2\lambda_j - \lambda_{j+1})\\
& = & -(-\lambda_1+\lambda_{j-1+1})+2(-\lambda_1+\lambda_{j+1})-(-\lambda_1+\lambda_{j+1+1})\\
& = & -\lambda_j + 2\lambda_{j+1} - \lambda_{j+2} = \alpha_{j+1}.
\end{eqnarray*}

For $\alpha_\ell$, we have that 
\begin{eqnarray*}
 \sigma_1 \sigma_2 \cdots \sigma_\ell(\alpha_\ell) & = & \sigma_1 \sigma_2 \cdots \sigma_\ell (-\lambda_{\ell-1} + 2\lambda_\ell) \\
& = & -(-\lambda_1+\lambda{\ell-1+1})+2(-\lambda_1+\lambda_{\ell+1}) \\
& = & -\lambda_1-\lambda_{\ell} = -\theta \;\;(= \alpha_0).
\end{eqnarray*}

For $\alpha_0 = -\theta$, we have:
\begin{eqnarray*}
 \sigma_1 \sigma_2 \cdots \sigma_\ell(-\theta) & = & \sigma_1 \sigma_2 \cdots \sigma_\ell( -\lambda_1 - \lambda_\ell)\\
& = & -(-\lambda_1 + \lambda_2)-(-\lambda_1+\lambda_{\ell+1}) \\
& = & 2\lambda_1-\lambda_2 = \alpha_1.
\end{eqnarray*}

So we have found that for all $0 \leq i \leq \ell$, $\sigma_A^{(1)}(\alpha_i) = \alpha_{i+1}$
and therefore
\begin{equation}
 \sigma_A^{(j)}(\alpha_i) = (\sigma_1 \sigma_2 \cdots \sigma_\ell)^j(\alpha_i) = \alpha_{i+j}. \end{equation}

The calculations for types $B_\ell$, $C_\ell$, and $D_\ell$ are similar and can be 
found in the Appendix. Calculations for types $E_6$ and $E_7$ were done with the help of a Maple worksheet written and updated by the first author which was orginally written for \cite{cms}. This worksheet is available at: 
\verb|http://dimax.rutgers.edu/~sadowski/LieAlgebraCalculations/index.html|

The Weyl group elements' actions can be summed up as follows (recall that $\alpha_0=-\theta$):
\begin{equation*} \begin{array}{rcl} \vspace{0.1in}
A_{\ell}: & \quad & \sigma_A^{(j)}(\alpha_j) = \alpha_{j+i\;(\mathrm{mod}\;\ell+1)} 
                  \;\;(\mathrm{for}\;0 \leq j \leq \ell 
                  \;\mathrm{and}\;\; 1 \leq i \leq \ell) \\ \vspace{0.1in}
B_{\ell}: & \quad & \sigma_B^{(1)}(\alpha_0) = \alpha_1,  
                    \sigma_B^{(1)}(\alpha_1) = \alpha_0, \;\mathrm{and}\;\;
                    \sigma_B^{(1)}(\alpha_j) = \alpha_j 
                  \;\;(\mathrm{for}\;1 < j \leq \ell) \\ \vspace{0.1in}
C_{\ell}: & \quad & \sigma_C^{(\ell)}(\alpha_0) = \alpha_\ell,
                    \sigma_C^{(\ell)}(\alpha_j) = \alpha_{\ell-i} 
                  \;\;(\mathrm{for}\;1 \leq j < \ell), \;\;\mathrm{and}\;\;
                    \sigma_C^{(\ell)}(\alpha_\ell) = \alpha_0 \\ \vspace{0.05in}
D_{\ell}: & \quad & \sigma_D^{(1)}(\alpha_0) = \alpha_1,
                    \sigma_D^{(1)}(\alpha_1) = \alpha_0,
                    \sigma_D^{(1)}(\alpha_j) = \alpha_j 
                  \;\;(\mathrm{for}\;1 < j < \ell-1),\\ \vspace{0.1in} 
          &       & \sigma_D^{(1)}(\alpha_{\ell-1}) = \alpha_\ell, \;\;\mathrm{and}\;\;
                    \sigma_D^{(1)}(\alpha_{\ell}) = \alpha_{\ell-1} \\ \vspace{0.05in} 
          & \ell\;\mathrm{odd} & \sigma_D^{(\ell-1)}(\alpha_0) = \alpha_{\ell-1},
                    \sigma_D^{(\ell-1)}(\alpha_1) = \alpha_{\ell}, \;\;\mathrm{and}\;\;
                    \sigma_D^{(\ell-1)}(\alpha_j) = \alpha_{\ell-j} 
                  \;\;(\mathrm{for}\;1 < j \leq \ell) \\ \vspace{0.05in} 
          &       & \sigma_D^{(\ell)}(\alpha_0) = \alpha_{\ell},
                    \sigma_D^{(\ell)}(\alpha_j) = \alpha_{\ell-j} 
                  \;\;(\mathrm{for}\;1 \leq j < \ell-1), \\ \vspace{0.1in}
          &       & \sigma_D^{(\ell)}(\alpha_{\ell-1}) = \alpha_0, \;\;\mathrm{and}\;\;
                    \sigma_D^{(\ell)}(\alpha_{\ell}) = \alpha_1 \\ \vspace{0.05in} 
          & \ell\;\mathrm{even} & \sigma_D^{(\ell-1)}(\alpha_0) = \alpha_{\ell-1},
                    \sigma_D^{(\ell-1)}(\alpha_1) = \alpha_{\ell},
                    \sigma_D^{(\ell-1)}(\alpha_j) = \alpha_{\ell-j} 
                  \;\;(\mathrm{for}\;1 < j < \ell-1), \\ \vspace{0.1in} 
          &       & \sigma_D^{(\ell-1)}(\alpha_{\ell-1}) = \alpha_0, \;\;\mathrm{and}\;\;
                    \sigma_D^{(\ell-1)}(\alpha_{\ell}) = \alpha_1 \\ \vspace{0.1in}  
          &       & \sigma_D^{(\ell)}(\alpha_j) = \alpha_{\ell-j} 
                  \;\;(\mathrm{for}\;0 \leq j \leq \ell) \\ \vspace{0.05in}
E_6:      & \quad & \sigma_E^{(1)}(\alpha_0) = \alpha_1,
                    \sigma_E^{(1)}(\alpha_1) = \alpha_6,
                    \sigma_E^{(1)}(\alpha_2) = \alpha_3,
                    \sigma_E^{(1)}(\alpha_3) = \alpha_5,
                    \sigma_E^{(1)}(\alpha_4) = \alpha_4, \\ \vspace{0.1in}
          &       & \sigma_E^{(1)}(\alpha_5) = \alpha_2, \;\;\mathrm{and}\;\;
                    \sigma_E^{(1)}(\alpha_6) = \alpha_0 \\ \vspace{0.05in}
          &       & \sigma_E^{(6)}(\alpha_0) = \alpha_6,
                    \sigma_E^{(6)}(\alpha_1) = \alpha_0,
                    \sigma_E^{(6)}(\alpha_2) = \alpha_5,
                    \sigma_E^{(6)}(\alpha_3) = \alpha_2,
                    \sigma_E^{(6)}(\alpha_4) = \alpha_4, \\ \vspace{0.1in}
          &       & \sigma_E^{(6)}(\alpha_5) = \alpha_3, \;\;\mathrm{and}\;\;
                    \sigma_E^{(6)}(\alpha_6) = \alpha_1 \\ \vspace{0.05in}
E_7:      & \quad & \sigma_E^{(7)}(\alpha_0) = \alpha_7,
                    \sigma_E^{(7)}(\alpha_1) = \alpha_6,
                    \sigma_E^{(7)}(\alpha_2) = \alpha_2,
                    \sigma_E^{(7)}(\alpha_3) = \alpha_5,
                    \sigma_E^{(7)}(\alpha_4) = \alpha_4, \\ \vspace{0.05in}
          &       & \sigma_E^{(7)}(\alpha_5) = \alpha_3,
                    \sigma_E^{(7)}(\alpha_6) = \alpha_1, \;\;\mathrm{and}\;\;
                    \sigma_E^{(7)}(\alpha_7) = \alpha_0 \\ \vspace{0.05in}
\end{array} \end{equation*}

\vspace{-0.3in}

\begin{equation}  
   \label{sigmaAction} \mathrm{Table\;} \ref{sigmaAction} \notag 
\end{equation}
\addtocounter{equation}{1}

\subsection{The Main Theorems}

Now we can apply our Weyl group calculations to determine the highest weight vector
of $(L(k,\lambda))^{(i)}$ (which is simultaneously an $L(k,0)$ and $\hat{\g}$ module). 
Suppose our underlying finite dimensional simple Lie algebra $\g$ is of type $X_\ell$.
Recall that $\theta=\sum_j a_j\alpha_j$ is the highest long root of $\g$, $k$ is a positive integer, $\lambda \in P_+$ with $\langle    \lambda, \theta \rangle  \leq k$, and $H^{(i)}$ is one of the coweights appearing in table \ref{validCoweights}.
Also, note that for each valid choice of index $i$, we have $a_i=1$.

The irreducible $\hat{\g}$-module $L(k,\lambda)$ was built up from the irreducible
$\g$-module $L(\lambda)$. In fact, the lowest (conformal) weight space of $L(k,\lambda)$ 
is merely a copy of $L(\lambda)$. In what follows, let us identify $L(\lambda)$ with this
subspace of $L(k,\lambda)$.

Let ${\bf v}$ be a highest weight vector for $L(k,\lambda)$. Such a vector is also homogeneous
vector of lowest conformal weight in $L(k,\lambda)$. So we have that 
${\bf v} \in L(\lambda) \subset L(k,\lambda)$. 
\begin{thm}\label{main-hwvec}
$\sigma_X^{(i)}({\bf v})$ is a highest weight vector for $(L(k,\lambda))^{(i)}$.
\end{thm}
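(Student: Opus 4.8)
The plan is to check directly that $\sigma_X^{(i)}({\bf v})$ is annihilated by all the ``raising operators'' $e_0^{(i)}, e_1^{(i)}, \dots, e_\ell^{(i)}$ for the new module structure $(L(k,\lambda))^{(i)}$, since a nonzero vector killed by all the $e_j^{(i)}$ in an integrable module is a highest weight vector. We already have explicit formulas for these operators from \eqref{ei-delaction} and \eqref{e0-delaction}: for $1 \le j \le \ell$, $e_j^{(i)} = E_j(\delta_{i,j})$, and $e_0^{(i)} = F_\theta(1 - a_i) = F_\theta(0)$ (using that $a_i = 1$ for the relevant indices $i$). So for $j \ne i$ (including $j = 0$, since $e_0^{(i)} = F_\theta(0)$ acts as the ordinary zero-mode action of $F_\theta$ inside the lowest conformal weight space $L(\lambda)$), the operator $e_j^{(i)}$ is simply the ordinary action of a Chevalley generator of $\g$ on the $\g$-module $L(\lambda)$, and for $j = i$ the operator $e_i^{(i)} = E_i(1)$ strictly lowers conformal weight.

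The key steps, in order, are: (1) Observe $\sigma_X^{(i)}({\bf v}) \in L(\lambda) \subset L(k,\lambda)$, since Weyl group elements preserve the lowest conformal weight space and $L(\lambda)$ is $W$-stable; in particular $\sigma_X^{(i)}({\bf v})$ has the same (lowest) conformal weight as ${\bf v}$. (2) Handle $e_i^{(i)} = E_i(1)$: this operator has conformal weight $-1$ as in the $sl_2$ argument, so $E_i(1)\sigma_X^{(i)}({\bf v})$ would lie strictly below the lowest conformal weight space and hence must vanish. (3) Handle $e_j^{(i)}$ for $j \ne i$, $1 \le j \le \ell$: here $e_j^{(i)} = E_j(0)$ is the genuine $\g$-action of $E_j$ on $L(\lambda)$, and we must show $E_j \cdot \sigma_X^{(i)}({\bf v}) = 0$. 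Since $\sigma_X^{(i)}({\bf v})$ spans the weight space $L(\lambda)_{\sigma_X^{(i)}(\lambda)}$ (an extremal weight space, so one-dimensional), $E_j$ raises its weight by $\alpha_j$; using the Table~\ref{sigmaAction} computations, $\alpha_j = \sigma_X^{(i)}(\alpha_m)$ for some $m \in \{1,\dots,\ell\}$ (one checks $j \ne i$ forces $m \ne 0$, i.e. the preimage is a genuine simple root, not $-\theta$), so $E_j \cdot \sigma_X^{(i)}({\bf v})$ lies in the $\sigma_X^{(i)}$-image of the weight space $L(\lambda)_{\lambda + \alpha_m}$, which is zero because $\lambda$ is the highest weight. (4) Handle $e_0^{(i)} = F_\theta(0)$: by the same bookkeeping $-\theta = \sigma_X^{(i)}(\alpha_{m_0})$ where $\alpha_{m_0}$ is the simple root with $\sigma_X^{(i)}(\alpha_{m_0}) = \alpha_0 = -\theta$ (read off from Table~\ref{sigmaAction}; it is a genuine simple root since $a_i = 1$), so $F_\theta \cdot \sigma_X^{(i)}({\bf v})$ lies in $\sigma_X^{(i)}(L(\lambda)_{\lambda + \alpha_{m_0}}) = 0$. (5) Conclude that $\sigma_X^{(i)}({\bf v}) \ne 0$ is annihilated by every $e_j^{(i)}$, $0 \le j \le \ell$, hence is a highest weight vector for $(L(k,\lambda))^{(i)}$.

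The main obstacle is step (3)–(4): making the weight-shift bookkeeping rigorous and uniform across all types. One needs the fact that for each relevant coweight index $i$ the permutation $\alpha_m \mapsto \sigma_X^{(i)}(\alpha_m)$ of $\{\alpha_0,\dots,\alpha_\ell\} = \{-\theta,\alpha_1,\dots,\alpha_\ell\}$ sends $\alpha_i$ to $-\theta$ (equivalently, $e_i^{(i)}$ is the one mode that changes conformal weight) and restricts to a bijection of the remaining simple roots among themselves — this is precisely the content of Table~\ref{sigmaAction}, and it is why exactly the operators $e_j^{(i)}$ with $j \ne i$ become ordinary $\g$-raising operators while $e_i^{(i)}$ becomes the ``affine'' one that drops conformal weight. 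Granting that table, the argument is uniform; the type-by-type Weyl group computations (relegated to the appendix) are what establish it. A secondary point to be careful about: in step (3) one should note that $E_j \cdot \sigma_X^{(i)}({\bf v})$ a priori could be nonzero as an element of $L(\lambda)_{\sigma_X^{(i)}(\lambda) + \alpha_j}$, and it is the identification of this target space with $\sigma_X^{(i)}(L(\lambda)_{\lambda + \alpha_m})$ — valid because $\sigma_X^{(i)}$ is a weight-space isomorphism — together with $L(\lambda)_{\lambda + \alpha_m} = 0$, that forces vanishing.
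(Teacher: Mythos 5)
Your proposal is correct and follows essentially the same route as the paper: kill $e_i^{(i)}=E_i(1)$ by a conformal-weight argument, and kill the remaining $e_j^{(i)}$ (including $e_0^{(i)}=F_\theta(0)$) by pulling the weight shift $\mu+\alpha_j$ back through $(\sigma_X^{(i)})^{-1}$ to $\lambda+\alpha_k$ with $1\le k\le\ell$, which is not a weight of $L(\lambda)$; the key input in both arguments is that Table~\ref{sigmaAction} exhibits $\sigma_X^{(i)}$ as a permutation of $\{\alpha_0,\dots,\alpha_\ell\}$ sending $\alpha_0$ to $\alpha_i$. The only cosmetic difference is your (unneeded) observation that the extremal weight space is one-dimensional.
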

\begin{proof}
Let ${\bf w} = \sigma_X^{(i)}({\bf v})$ and $\mu = \sigma_X^{(i)}(\lambda)$.

Since $\sigma_X^{(i)}$ is an invertible map, ${\bf w} \not= {\bf 0}$. The weight of ${\bf v}$
(as an element of $L(\lambda)$) is $\lambda$, so the weight of ${\bf w}$ is $\mu$. To establish that ${\bf w}$ is a highest weight vector for $(L(k,\lambda))^{(i)}$, we need to show 
that $(e_j)^{(i)} \cdot {\bf w} = {\bf 0}$ for $j=0,\dots,\ell$.

By (\ref{ei-delaction}), we have
$(e_j)^{(i)} \cdot {\bf w} = E_j(\delta_{ij})({\bf w})$
for $1 \leq j \leq \ell$.

When $j=i$, $(e_i)^{(i)} \cdot {\bf w} = E_i(1)({\bf w})$. Now recall
that the operator $E_i(1)$ lowers (conformal) weights by $-1$. Since ${\bf w}$ is already
a lowest (conformal) weight vector, $(e_i)^{(i)} \cdot {\bf w} = {\bf 0}$.  

Next, when $j\not=i$, $(e_j)^{(i)} \cdot {\bf w} = E_j(0)({\bf w})$ which is
a vector of weight $\mu+\alpha_j$ (in the $\g$-module $L(\lambda)$).

By (\ref{e0-delaction}), we have $(e_0)^{(i)} \cdot {\bf w} = F_\theta(1-a_i)({\bf w})$
$=F_\theta(0)({\bf w})$ which has weight $\mu-\theta = \mu+\alpha_0$.

Therefore, we have reduced our problem to establishing that $\mu+\alpha_j$ for $j\not=i$,
$0 \leq j \leq \ell$ are not weights of the $\g$-module $L(\lambda)$.

Since Weyl group elements permute the weights of $L(\lambda)$, if $\mu+\alpha_j$
is a weight, then $(\sigma_X^{(i)})^{-1}(\mu+\alpha_j)=\lambda+(\sigma_X^{(i)})^{-1}(\alpha_j)$ 
must be a weight as well. However, we can see by inspecting table \ref{sigmaAction}
that $\sigma_X^{(i)}$ permutes the set of simple roots along with $\alpha_0=-\theta$. 
Notice that each case $\sigma_X^{(i)}(\alpha_0)=\alpha_i$. Therefore, $(\sigma_X^{(i)})^{-1}$
maps each $\alpha_j$ where $j\not=i$, $0 \leq j \leq \ell$ to some $\alpha_k$ 
where $1 \leq k \leq \ell$. But $\lambda$ is a highest weight vector for $L(\lambda)$, 
therefore $\lambda+\alpha_k$ ($1\leq k \leq \ell$) is not a weight. This implies that 
$\mu+\alpha_j$ (for $j\not=i$, $0 \leq j \leq \ell$) is not a weight. 
Therefore, ${\bf w}$ is annihilated by the action of each $(e_j)^{(i)}$ (for $0\leq j \leq \ell$) and thus is a highest weight vector for $(L(k,\lambda))^{(i)}$.
\end{proof}

\begin{rem} Elements of the Weyl group are linear transformations, so they always send the
linear functional $\lambda=0$ to itself. This implies that a highest weight vector for 
$L(k,0)$ (i.e. a non-zero scalar multiple of the vaccuum vector) is still a highest 
weight vector for $(L(k,0))^{(i)}$. This special case was discussed in \cite{li-affine}.
\end{rem}

\begin{thm}\label{main-measurement}
Recall $(L(k,\lambda))^{(i)} \cong L(k,\lambda^{(i)})$. 
Let $\lambda = \sum_{j=1}^\ell m_j\lambda_j$. Then we have the following:
\begin{center}
\begin{tabular}{ccrcl}
Type     & & \\ 
$A_\ell$ & & $\lambda^{(i)}$ & $=$ & 
    $\sum_{j=1}^\ell m_j\lambda_{j+i} + (k-\langle    \lambda,\theta \rangle )\lambda_i$ \\
$B_\ell$ & & $\lambda^{(1)}$ & $=$ & 
    $(k-\langle   \lambda,\theta\rangle )\lambda_1 + \sum_{j=2}^\ell m_j\lambda_j$ \\
$C_\ell$ & & $\lambda^{(\ell)}$ & $=$ & 
    $\sum_{j=1}^\ell m_j\lambda_{\ell-j} - (k-\langle   \lambda,\theta\rangle )\lambda_\ell$ \\
$D_\ell$ & & $\lambda^{(1)}$ & $=$ & 
    $(k-\langle    \lambda,\theta \rangle )\lambda_1 + \sum_{j=2}^{\ell-2}m_j\lambda_j + m_{\ell-1}\lambda_\ell + m_\ell\lambda_{\ell-1}$ \\ 
 & ($\ell$ odd) & $\lambda^{(\ell-1)}$ & $=$ & 
    $m_{\ell-1}\lambda_1 + \sum_{j=2}^{\ell-2} m_j\lambda_{\ell-j} + (k - \langle   \lambda,\theta\rangle )\lambda_{\ell-1} + m_1\lambda_\ell$ \\
 & ($\ell$ even) & $\lambda^{(\ell-1)}$ & $=$ & 
    $m_\ell\lambda_1 + \sum_{j=2}^{\ell-2} m_j\lambda_{\ell-j} + (k - \langle   \lambda,\theta\rangle )\lambda_{\ell-1} + m_1\lambda_\ell$ \\
 & ($\ell$ odd) & $\lambda^{(\ell)}$ & $=$ & 
    $m_\ell\lambda_1 + \sum_{j=2}^{\ell-2} m_j\lambda_{\ell-j} + m_1\lambda_{\ell-1} + (k - \langle   \lambda,\theta\rangle )\lambda_\ell$ \\
 & ($\ell$ even) & $\lambda^{(\ell)}$ & $=$ & 
    $m_{\ell-1}\lambda_1 + \sum_{j=2}^{\ell-2} m_j\lambda_{\ell-j} + m_1\lambda_{\ell-1} + (k - \langle   \lambda,\theta\rangle )\lambda_\ell$ \\
$E_6$ & & $\lambda^{(1)}$ & $=$ &
    $(k-\langle   \lambda,\theta\rangle )\lambda_1 + m_5\lambda_2 + m_2\lambda_3 + m_4\lambda_4 + m_3\lambda_5 + m_1\lambda_6$ \\
      & & $\lambda^{(6)}$ & $=$ & 
    $m_6\lambda_1 +  m_3\lambda_2 + m_5\lambda_3 + m_4\lambda_4 + m_2\lambda_5 + (k - \langle   \lambda,\theta\rangle )\lambda_6$\\
$E_7$ & & $\lambda^{(7)}$ & $=$ &
    $m_1\lambda_6 + m_2\lambda_2 + m_3\lambda_5 + m_4\lambda_4 + m_5\lambda_3 + m_6\lambda_1 + (k - \langle   \lambda,\theta\rangle )\lambda_7$
\end{tabular}
\end{center}
\end{thm}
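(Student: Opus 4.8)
The plan is to deduce $\lambda^{(i)}$ directly from Theorem~\ref{main-hwvec}, using the commutator computation $(\ref{hi-delaction})$ together with the Weyl group data in Table~\ref{sigmaAction}. By Theorem~\ref{main-hwvec}, the vector ${\bf w} := \sigma_X^{(i)}({\bf v})$ is a highest weight vector for $(L(k,\lambda))^{(i)}$, and since it is a vector of lowest conformal weight it lies in the copy of $L(\lambda)$ sitting as the lowest conformal weight space of $L(k,\lambda)$. As Weyl group elements permute the weight spaces of the finite-dimensional $\g$-module $L(\lambda)$, the weight of ${\bf w}$ with respect to the \emph{original} $\g$-action is $\mu := \sigma_X^{(i)}(\lambda)$. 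Thus everything reduces to re-measuring the $\hat{\g}$-weight of ${\bf w}$ for the \emph{new} action: we already know $(L(k,\lambda))^{(i)} \cong L(k,\lambda^{(i)})$ for a unique $\lambda^{(i)} \in P_+$ with $\langle\lambda^{(i)},\theta\rangle \le k$, and $\lambda^{(i)}$ is recovered from the eigenvalues of the operators $(H_j)^{(i)}(0)$ ($1 \le j \le \ell$) on ${\bf w}$.

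Next I would apply $(\ref{hi-delaction})$: $(H_j)^{(i)}(0) = H_j(0) + \frac{2k}{\|\alpha_j\|^2}\delta_{i,j}$, and $H_j(0)$ acts on ${\bf w}\in L(\lambda)_\mu$ by the scalar $\mu(H_j)$. For every coweight in Table~\ref{validCoweights} one has $a_i = 1$, which forces $\alpha_i$ to be a long root, so $\|\alpha_i\|^2 = 2$ and the correction term at $j=i$ is exactly $k$. Hence $\lambda^{(i)}(H_j) = \mu(H_j) + k\,\delta_{i,j}$, i.e.
\[ \lambda^{(i)} = \mu + k\lambda_i = \sigma_X^{(i)}(\lambda) + k\lambda_i . \]

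It then remains to expand $\sigma_X^{(i)}(\lambda)$ in the basis of fundamental weights using Table~\ref{sigmaAction}. Writing $\sigma = \sigma_X^{(i)}$ and $\lambda = \sum_{j=1}^\ell m_j\lambda_j$, the coefficient of $\lambda_j$ in $\sigma(\lambda)$ equals $\sigma(\lambda)(H_j) = \lambda\big(\sigma^{-1}(H_j)\big) = \lambda\big((\sigma^{-1}\alpha_j)^\vee\big)$, since $W$ acts compatibly on $\h$ and $\h^*$ and carries coroots to coroots. Table~\ref{sigmaAction} shows that in each type $\sigma$ permutes the set $\{\alpha_0,\alpha_1,\dots,\alpha_\ell\}$ of affine simple roots (recall $\alpha_0 = -\theta$) and that $\sigma(\alpha_0) = \alpha_i$; therefore $\sigma^{-1}(\alpha_i) = -\theta$, making the $\lambda_i$-coefficient of $\sigma(\lambda)$ equal to $\lambda\big((-\theta)^\vee\big) = -\lambda(H_\theta) = -\langle\lambda,\theta\rangle$, while for $j\ne i$ one has $\sigma^{-1}(\alpha_j) = \alpha_{\pi^{-1}(j)}$ with $\pi^{-1}(j)\in\{1,\dots,\ell\}$ (here $\pi$ is the permutation of $\{0,\dots,\ell\}$ with $\sigma(\alpha_a) = \alpha_{\pi(a)}$), so that coefficient is $\lambda(H_{\pi^{-1}(j)}) = m_{\pi^{-1}(j)}$. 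Combining this with the previous display gives the uniform formula
\[ \lambda^{(i)} = (k-\langle\lambda,\theta\rangle)\,\lambda_i + \sum_{j\ne i} m_{\pi^{-1}(j)}\,\lambda_j , \]
and substituting the explicit $\pi$ for each type from Table~\ref{sigmaAction} --- rewritten under the conventions $\lambda_0 = 0$ and (for $A_\ell$) indices modulo $\ell+1$ --- produces every formula in the statement.

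I expect the only real friction to be bookkeeping in this last step: keeping the index conventions straight in each type (especially the cyclic shift $\lambda_j \mapsto \lambda_{j+i}$ for $A_\ell$ and the two distinct coweights for $D_\ell$ and $E_6$), and verifying in each case that the unique index $j$ with $\sigma^{-1}(\alpha_j) = \alpha_0$ is $j = i$, so that the entry $k-\langle\lambda,\theta\rangle$ lands on $\lambda_i$ and the remaining $m_j$'s are merely permuted. All of this is mechanical once Table~\ref{sigmaAction} is in hand; the genuinely substantive part --- proving the Weyl group identities of that table --- is relegated to the Appendix (and to the Maple worksheet in types $E_6$, $E_7$).
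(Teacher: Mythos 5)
Your proof is correct, and while its first half coincides with the paper's argument (both reduce the problem to $\lambda^{(i)} = \sigma_X^{(i)}(\lambda) + k\lambda_i$ via Theorem \ref{main-hwvec} and (\ref{hi-delaction})), the way you evaluate $\mu = \sigma_X^{(i)}(\lambda)$ is genuinely different. The paper expands $\mu = \sum_j m_j\,\sigma_X^{(i)}(\lambda_j)$ using the explicit formulas for $\sigma_X^{(i)}(\lambda_j)$ computed in the Appendix (e.g.\ (\ref{alambda}), (\ref{bactionj}), (\ref{caction}), (\ref{daction1j}), (\ref{eaction1})) and then collects coefficients type by type, recognizing the combination $-\langle\lambda,\theta\rangle$ by hand in each case. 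You instead dualize: the $\lambda_j$-coefficient of $\sigma(\lambda)$ is $\lambda\bigl((\sigma^{-1}\alpha_j)^\vee\bigr)$, so the permutation of $\{\alpha_0,\dots,\alpha_\ell\}$ recorded in Table \ref{sigmaAction} --- the same data already needed to prove Theorem \ref{main-hwvec} --- immediately yields the uniform formula $\lambda^{(i)} = (k-\langle\lambda,\theta\rangle)\lambda_i + \sum_{j\ne i} m_{\pi^{-1}(j)}\lambda_j$. This buys two things: it makes the appearance of $k-\langle\lambda,\theta\rangle$ conceptually transparent (it is exactly the statement $\sigma^{-1}(\alpha_i)=-\theta$ combined with $\|\theta\|^2=2$), and it makes the Appendix computations of the $\sigma(\lambda_j)$ unnecessary for this theorem. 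One small caveat: your claim that $\|\alpha_i\|^2=2$ because ``$a_i=1$ forces $\alpha_i$ to be long'' is true but is most safely verified case by case from Table \ref{validCoweights} (only types $B_\ell$ and $C_\ell$ are at issue), which is what the paper implicitly does. Finally, note that your uniform formula produces $+(k-\langle\lambda,\theta\rangle)\lambda_\ell$ in type $C_\ell$; the minus sign printed in the theorem's table is a typo, since the paper's own proof likewise arrives at $\lambda^{(\ell)} = \sum_j m_j\lambda_{\ell-j} - \langle\lambda,\theta\rangle\lambda_\ell + k\lambda_\ell$.
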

\begin{proof}
Let $\g$ be of type $X_\ell$ and let $H^{(i)}$ be a coweight from table \ref{validCoweights}.
Consider a vector ${\bf u} \in L(\lambda)_\beta \subset L(\lambda) \subset L(k,\lambda)$ 
(i.e. the $\h$-weight of ${\bf u}$ is $\beta$). Consider ${\bf u}$ as a vector in 
$(L(k,\lambda))^{(i)}$ (which is the same vector space as $L(k,\lambda)$). We have a new associated $\hat{\g}$-action and thus a new $\h$-action. In particular, by 
\ref{hi-delaction}), $(H_j)^{(i)}(0) = H_j(0) + \frac{2k}{\|\alpha_i\|^2} \delta_{i,j}$. 
If we restrict $i$ to the indices appearing in table \ref{validCoweights}, we see that
in each case $\|\alpha_i\|^2=2$. Thus $(H_j)^{(i)}(0) = H_j(0)+k\delta_{i,j}$.
Therefore, when ${\bf u}$ is thought of as a weight vector under the new 
$\h$-action, the (new) $\h$-weight of ${\bf u}$ is $\beta+k\lambda_i$.

Let ${\bf v}$ be a highest weight vector for $L(k,\lambda)$. Theorem \ref{main-hwvec} 
states that ${\bf w} = \sigma_X^{(i)}({\bf v})$ is a highest weight vector for
$(L(k,\lambda))^{(i)}$. As an element of the $\g$-module 
$L(\lambda)$, ${\bf w}$ has $\h$-weight $\mu = \sigma_X^{(i)}(\lambda)$. 
Therefore, the $\h$-weight of ${\bf w}$ in terms of the new $\h$-action is 
$\lambda^{(i)} = \mu+k\lambda_i$. 

For $X_\ell=A_\ell$, using (\ref{alambda}), we have
$\mu = \sum_{j=1}^{\ell} m_j(\lambda_{j+i}-\lambda_i)$
$= \sum_{j=1}^{\ell} m_j\lambda_{j+i}-(\sum_{j=1}^{\ell} m_j)\lambda_i)$
$= \sum_{j=1}^{\ell} m_j\lambda_{j+i} - \langle    \lambda,\theta \rangle  \lambda_i$.
Therefore, $\lambda^{(i)} = \sum_{j=1}^{\ell} m_j\lambda_{j+i}-\langle    \lambda,\theta \rangle \lambda_i + k\lambda_i$.

For type $B_\ell$, by (\ref{bactionj}) and (\ref{bactionell}), we have
$\mu = \sum_{j=1}^{\ell-1} m_j(-2\lambda_1+\lambda_j)+m_\ell(-\lambda_1+\lambda_\ell)$
$ = \sum_{j=1}^\ell m_j\lambda_j -2(\sum_{j=1}^{\ell-1}m_j)\lambda_1 +m_\ell\lambda_1$
$ = \sum_{j=2}^\ell m_j\lambda_j - \langle   \lambda,\theta\rangle \lambda_1$.
Therefore, $\lambda^{(1)} = \sum_{j=2}^\ell m_j\lambda_j - \langle   \lambda,\theta\rangle  \lambda_1 + k\lambda_1$.

For type $C_\ell$, by (\ref{caction}), we have 
$\mu = \sum_{j=1}^\ell m_j (\lambda_{\ell-j} - \lambda_\ell)$ 
$ = \sum_{j=1}^\ell m_j \lambda_{\ell-j} - (\sum_{j=1}^\ell m_j)\lambda_\ell$ 
$ = \sum_{j=1}^\ell m_j\lambda_{\ell-j} - \langle   \lambda,\theta\rangle \lambda_\ell$.
Therefore, $\lambda^{(\ell)} = \sum_{j=1}^\ell m_j\lambda_{\ell-j} - \langle   \lambda,\theta\rangle \lambda_\ell + k\lambda_\ell$.

For type $D_\ell$ and $i=1$, by (\ref{daction1j}), (\ref{daction1ell-1}), and 
(\ref{daction1ell}), we have 
$\mu = \sum_{j=1}^{\ell-2} m_j(-2\lambda_1+\lambda_j)+m_{\ell-1}(-\lambda_1+\lambda_\ell) + m_\ell(-\lambda_1+\lambda_{\ell-1})$
$ = \sum_{j=1}^{\ell-2} m_j(-2\lambda_1+\lambda_j)+m_{\ell-1}(-\lambda_1+\lambda_\ell) + m_\ell(-\lambda_1+\lambda_{\ell-1})$
$ = -\langle    \lambda,\theta \rangle  \lambda_1 + \sum_{j=2}^{\ell-2}m_j\lambda_j + m_{\ell-1}\lambda_\ell + m_\ell\lambda_{\ell-1}$. 
Therefore, $\lambda^{(1)} = -\langle    \lambda,\theta \rangle  \lambda_1 + \sum_{j=2}^{\ell-2}m_j\lambda_j + m_{\ell-1}\lambda_\ell + m_\ell\lambda_{\ell-1}+k\lambda_1$. 

Now, consider type $D_\ell$ when $\ell$ is odd and $i=\ell-1$. By 
(\ref{dactionell-11}), (\ref{dactionell-1j}), (\ref{dactionell-1ell-1}), and
(\ref{dactionell-1ell}), we have 
$\mu = m_1(-\lambda_{\ell-1}+\lambda_\ell) + \sum_{j=2}^{\ell-2} m_j(\lambda_{\ell-j}-2\lambda_{\ell-1} + m_{\ell-1}(\lambda_1 - \lambda_{\ell-1}) + m_\ell(-\lambda_{\ell-1}$
$ = m_{\ell-1}\lambda_1 + \sum_{j=2}^{\ell-2} m_j\lambda_{\ell-j} - (m_1 + 2\sum_{j=2}^{\ell-2}m_j +m_{\ell-1} + m_\ell)\lambda_{\ell-1} + m_1\lambda_\ell$
$ = m_{\ell-1}\lambda_1 + \sum_{j=2}^{\ell-2} m_j\lambda_{\ell-j} - \langle   \lambda,\theta\rangle \lambda_{\ell-1} + m_1\lambda_\ell$.
Therefore, $\lambda^{(\ell-1)} = m_{\ell-1}\lambda_1 + \sum_{j=2}^{\ell-2} m_j\lambda_{\ell-j} - \langle   \lambda,\theta\rangle \lambda_{\ell-1} + m_1\lambda_\ell + k\lambda_{\ell-1}$.
Considering a Dynkin diagram symmetry (interchanging the roles of nodes $\ell-1$ and $\ell$),
we also have $\lambda^{(\ell)} = m_\ell\lambda_1 + \sum_{j=2}^{\ell-2} m_j\lambda_{\ell-j} 
+ m_1\lambda_{\ell-1} - \langle   \lambda,\theta\rangle \lambda_\ell + k\lambda_\ell$.

When $\ell$ is even and $i=\ell-1$, by (\ref{dactioneven}), we have
$\mu = m_1(-\lambda_{\ell-1}+\lambda_\ell) + \sum_{j=2}^{\ell-2} m_j(\lambda_{\ell-j}-2\lambda_{\ell-1} + m_{\ell-1}(-\lambda_{\ell-1}) + m_\ell(\lambda_1-\lambda_{\ell-1}$
$ = m_\ell\lambda_1 + \sum_{j=2}^{\ell-2} m_j\lambda_{\ell-j} - (m_1 + 2\sum_{j=2}^{\ell-2}m_j +m_{\ell-1} + m_\ell)\lambda_{\ell-1} + m_1\lambda_\ell$
$ = m_\ell\lambda_1 + \sum_{j=2}^{\ell-2} m_j\lambda_{\ell-j} - \langle   \lambda,\theta\rangle \lambda_{\ell-1} + m_1\lambda_\ell$.
Therefore, $\lambda^{(\ell-1)} = m_\ell\lambda_1 + \sum_{j=2}^{\ell-2} m_j\lambda_{\ell-j} - \langle   \lambda,\theta\rangle \lambda_{\ell-1} + m_1\lambda_\ell + k\lambda_{\ell-1}$.
Again, considering a Dynkin diagram symmetry (interchanging the roles of nodes $\ell-1$ 
and $\ell$), we also have $\lambda^{(\ell)} = m_{\ell-1}\lambda_1 + \sum_{j=2}^{\ell-2} m_j\lambda_{\ell-j} + m_1\lambda_{\ell-1} - \langle   \lambda,\theta\rangle \lambda_\ell + k\lambda_\ell$.

Finally, let us consider types $E_6$ and $E_7$. For type $E_6$ and $i=1$, by 
(\ref{eaction1}), we have
$\mu = m_1(-\lambda_1 + \lambda_6) + m_2(-2\lambda_1 + \lambda_3) + m_3(-2\lambda_1 + \lambda_5) + m_4(-3\lambda_1 + \lambda_4) + m_5(-2\lambda_1 + \lambda_2) + m_6(-\lambda_1)$
$ = -(m_1 + 2m_2 + 2m_3 + 3m_4 + 2m_5 + m_6)\lambda_1 + m_5\lambda_2 + m_2\lambda_3
+ m_4\lambda_4 + m_3\lambda_5 + m_1\lambda_6$
$ = -\langle   \lambda,\theta\rangle \lambda_1 + m_5\lambda_2 + m_2\lambda_3
+ m_4\lambda_4 + m_3\lambda_5 + m_1\lambda_6$.
Therefore, $\lambda^{(1)} = -\langle   \lambda,\theta\rangle \lambda_1 + m_5\lambda_2 + m_2\lambda_3 + m_4\lambda_4 + m_3\lambda_5 + m_1\lambda_6 + k\lambda_1$.
Now we can use the symmetry of the Dynkin diagram of $E_6$ (interchanging nodes $1$ and $6$
and also interchanging nodes $3$ and $5$). Therefore, for type $E_6$ with $i=6$, we have
$\lambda^{(6)} = m_6\lambda_1 +  m_3\lambda_2 + m_5\lambda_3 + m_4\lambda_4 + m_2\lambda_5 - \langle   \lambda,\theta\rangle \lambda_6 + k\lambda_6$.

For type $E_7$ and $i=7$, by (\ref{eaction7}), we have
$\mu = m_1(-2\lambda_7 + \lambda_6) + m_2(-2\lambda_7 + \lambda_2) + m_3(-3\lambda_7 + \lambda_5) + m_4(-4\lambda_7 + \lambda_4) + m_5(-3\lambda_7 + \lambda_3) + m_6(-2\lambda_7 + \lambda_1) + m_7(-\lambda_7)$
$ = m_1\lambda_6 + m_2\lambda_2 + m_3\lambda_5 + m_4\lambda_4 + m_5\lambda_3 + m_6\lambda_1  
- (2m_1 + 2m_2 + 3m_3 + 4m_4 + 3m_5 + 2m_6 + m_7)\lambda_7$
$ = m_1\lambda_6 + m_2\lambda_2 + m_3\lambda_5 + m_4\lambda_4 + m_5\lambda_3 + m_6\lambda_1 
- \langle   \lambda,\theta\rangle \lambda_7$.
Therefore, $\lambda^{(7)} = m_1\lambda_6 + m_2\lambda_2 + m_3\lambda_5 + m_4\lambda_4 + m_5\lambda_3 + m_6\lambda_1 - \langle   \lambda,\theta\rangle \lambda_7 + k\lambda_7$.
\end{proof}

\section{Appendix - Weyl group calculations} 

All of these calculations repeatedly use the fact that
$\sigma_i (\lambda_j) = \lambda_j - \delta_{i,j}\alpha_j$, so in particular,
$\sigma_{j-1}\sigma_{j-2}\cdots\sigma_2\sigma_1(\lambda_j)=\lambda_j$. 
In addition, if $C = (a_{ij})$ is the Cartan matrix of our simple Lie algebra $\g$,
then $\alpha_i = \sum_j a_{ij}\lambda_j$. Also, recall our conventions that 
$\lambda_0=0$, $\alpha_0=-\theta$ (the negative of the highest long root of $\g$),
and $\lambda_j = \lambda_{j \;(\mathrm{mod}\; \ell+1)}$ for all $j \in \Z$.

\subsection{Type $B_\ell$}

Looking at the Cartan matrix of type $B_\ell$, we see that
$\alpha_i = -\lambda_{i-1} + 2\lambda_i - \lambda_{i+1}$ for $i\not=\ell-1$
and $\alpha_{\ell-1}=-\lambda_{\ell-2}+2\lambda_{\ell-1}-2\lambda_\ell$. 
Also, recall that $\sigma_{B}^{(1)} = \sigma_1 \sigma_2 \cdots \sigma_{\ell-1} \sigma_\ell \sigma_{\ell-1} \cdots \sigma_2 \sigma_1$

For $\lambda_j$, $1 \le j \le \ell-1$, we have:
\begin{eqnarray} \notag
\sigma_{B}^{(1)}(\lambda_j)
& = & \sigma_1 \sigma_2  \cdots \sigma_{\ell-1} \sigma_\ell \sigma_{\ell-1} \cdots \sigma_2 \sigma_1(\lambda_j) \; = \;
\sigma_ 1 \sigma_2  \cdots \sigma_{\ell-1} \sigma_\ell \sigma_{\ell-1} \cdots \sigma_{j+1} \sigma_j(\lambda_j) \\ \notag
& = & \sigma_1 \sigma_2  \cdots \sigma_{\ell-1} \sigma_\ell \sigma_{\ell-1} \cdots \sigma_{j+2} \sigma_{j+1} (\lambda_{j-1} - \lambda_j + \lambda_{j+1}) \\ \notag
& = & \sigma_1 \sigma_2  \cdots \sigma_{\ell-1} \sigma_\ell \sigma_{\ell-1} \cdots \sigma_{j+3} \sigma_{j+2} (\lambda_{j-1} - \lambda_{j+1} + \lambda_{j+2}) \; = \; \cdots \\ \notag
& = & \sigma_1 \sigma_2 \cdots \sigma_\ell \sigma_{\ell-1} (\lambda_{j-1} - \lambda_{\ell-2} + \lambda_{\ell-1}) \; = \;
\sigma_1 \sigma_2 \cdots \sigma_\ell \sigma_{\ell} (\lambda_{j-1} -\lambda_{\ell-1} + 2\lambda_\ell)\\ \notag
& = & \sigma_1 \sigma_2 \cdots \sigma_{\ell-1} (\lambda_{j-1} + \lambda_{\ell-1} -2\lambda_\ell) \; = \;
\sigma_1 \sigma_2 \cdots \sigma_{\ell-2} (\lambda_{j-1} + \lambda_{\ell-2} - \lambda_{\ell-1})\\ \notag
& = & \sigma_1 \sigma_2 \cdots \sigma_{\ell-3} (\lambda_{j-1} + \lambda_{\ell-3} -\lambda_{\ell-2}) \; = \; \cdots \\ \notag
& = & \sigma_1 \sigma_2 \cdots \sigma_{j}( \lambda_{j-1} +\lambda_j - \lambda_{j+1}) 
\; = \; \sigma_1 \sigma_2 \cdots \sigma_{j-1}( 2\lambda_{j-1} -\lambda_j) \\ \notag
& = & \sigma_1 \sigma_2 \cdots \sigma_{j-2}(2\lambda_{j-2} - 2\lambda_{j-1} + \lambda_j)
\; = \; \cdots \\
& = & \sigma_1 (2\lambda_1 -2\lambda_2 + \lambda_j) 
\; = \; -2\lambda_1 + \lambda_j. \label{bactionj}
\end{eqnarray}

For $\lambda_\ell$, we have:
\begin{eqnarray} \notag
\sigma_{B}^{(1)}(\lambda_\ell) & = &
 \sigma_1 \sigma_2  \cdots \sigma_{\ell-1} \sigma_\ell \sigma_{\ell-1} \cdots \sigma_2 \sigma_1(\lambda_\ell) 
\; = \; \sigma_1 \sigma_2  \cdots \sigma_{\ell-1} \sigma_\ell (\lambda_\ell) \\ \notag
& = & \sigma_1 \sigma_2  \cdots \sigma_{\ell-1} (\lambda_{\ell-1} - \lambda_\ell)
\; = \; \sigma_1 \sigma_2  \cdots \sigma_{\ell-2} (\lambda_{\ell-2} -\lambda_{\ell-1} + \lambda_\ell) \\ \notag
& = & \sigma_1 \sigma_2 \cdots \sigma_{\ell-3}(\lambda_{\ell-3} - \lambda_{\ell-2} + \lambda_\ell) \; = \; \cdots \\
& = & \sigma_1(\lambda_1 - \lambda_2 + \lambda_\ell)
\; = \; -\lambda_1 + \lambda_\ell. \label{bactionell}
\end{eqnarray}

Applying these results to the fundamental roots and highest long root, we have:
\begin{eqnarray*}
 \sigma_{B}^{(1)}(\alpha_0) & = & \sigma_{B}^{(1)}(-\theta) = 
 \sigma_{B}^{(1)}(-\lambda_2) = 2\lambda_1-\lambda_2 = \alpha_1 \\
 \sigma_{B}^{(1)}(\alpha_1) & = & \sigma_{B}^{(1)}(2\lambda_j-\lambda_{j+1})
 = -2\lambda_1+2\lambda_1-\lambda_2 = -\theta = \alpha_0 \\
 \sigma_{B}^{(1)}(\alpha_j) & = & \alpha_j \qquad \mathrm{for} \; 1 < j \le \ell
\end{eqnarray*}

\subsection{Type $C_\ell$}

Looking at the Cartan matrix of type $C_\ell$, we see that
$\alpha_i = -\lambda_{i-1} + 2\lambda_i - \lambda_{i+1}$ for $1 \le i < \ell$
and $\alpha_\ell=-2\lambda_{\ell-1}+2\lambda_\ell$. 
Also, recall that $\sigma_{C}^{(\ell)} = (\sigma_\ell \cdots \sigma_2 \sigma_1)
(\sigma_\ell \cdots \sigma_2)\cdots(\sigma_\ell\sigma_{\ell-1})(\sigma_\ell)$.

\begin{eqnarray*}
   \sigma_\ell \sigma_{\ell-1} \cdots \sigma_1(\lambda_j) 
& = & \sigma_\ell\sigma_{\ell-1}\cdots\sigma_j(\lambda_j) 
\; = \; \sigma_\ell\sigma_{\ell-1}\cdots\sigma_{j+1}(\lambda_{j-1}-\lambda_j+\lambda_{j+1}) \\
& = & \sigma_\ell\sigma_{\ell-1}\cdots\sigma_{j+2}(\lambda_{j-1}-\lambda_{j+1}+\lambda_{j+2}) 
\; = \; \cdots \\
& = & \sigma_\ell(\lambda_{j-1}-\lambda_{\ell-1}+\lambda_\ell) 
\; = \; \lambda_{j-1}+\lambda_{\ell-1}-\lambda_\ell
\end{eqnarray*}
where $1 \le j < \ell$. Also, 
$\sigma_\ell \sigma_{\ell-1} \cdots \sigma_1(\lambda_\ell) = \sigma(\lambda_\ell)
= 2\lambda_{\ell-1}-\lambda_\ell$, so the above formula works for all $j$. Applying
this result multiple times, we have (for $1 \leq j < \ell$):
\begin{eqnarray} \notag
\sigma_{C}^{(\ell)}(\lambda_j) & = & (\sigma_\ell \sigma_{\ell-1} \cdots \sigma_1) (\sigma_\ell \sigma_{\ell-1} \cdots \sigma_2) \cdots (\sigma_\ell \sigma_{\ell-1}) (\sigma_\ell)(\lambda_j) \; = \; \cdots \\ \notag
& = & (\sigma_\ell \sigma_{\ell-1} \cdots \sigma_1) (\sigma_\ell \sigma_{\ell-1} \cdots \sigma_2) \cdots (\sigma_\ell \sigma_{\ell-1} \cdots \sigma_j)(\lambda_j) \\ \notag
& = & (\sigma_\ell \sigma_{\ell-1} \cdots \sigma_1) (\sigma_\ell \sigma_{\ell-1} \cdots \sigma_2) \cdots (\sigma_\ell \sigma_{\ell-1} \cdots \sigma_j-1)(\lambda_{j-1}+\lambda_{\ell-1}-\lambda_\ell) \\ \notag
& = & (\sigma_\ell \sigma_{\ell-1} \cdots \sigma_1) (\sigma_\ell \sigma_{\ell-1} \cdots \sigma_2) \cdots (\sigma_\ell \sigma_{\ell-1} \cdots \sigma_j-2)(\lambda_{j-2}+\lambda_{\ell-2}-\lambda_\ell) \; = \; \cdots \\ \notag
& = & (\sigma_\ell \sigma_{\ell-1} \cdots \sigma_1)(\lambda_1+\lambda_{\ell-(j-1)}-\lambda_\ell) \; = \; \lambda_{\ell-j} - \lambda_\ell
\end{eqnarray}
For $\lambda_\ell$, we have:
\begin{eqnarray} \notag
\sigma_{C}^{(\ell)}(\lambda_\ell) & = &  (\sigma_\ell \sigma_{\ell-1} \cdots \sigma_1) (\sigma_\ell \sigma_{\ell-1} \cdots \sigma_2) \cdots (\sigma_\ell \sigma_{\ell-1}) (\sigma_\ell)(\lambda_\ell) \\ \notag
& = & (\sigma_\ell \sigma_{\ell-1} \cdots \sigma_1)(\sigma_\ell \sigma_{\ell-1} \cdots \sigma_2) \cdots (\sigma_\ell \sigma_{\ell-1})(2\lambda_{\ell - 1} - \lambda_\ell)\\ \notag
& = & (\sigma_\ell \sigma_{\ell-1} \cdots \sigma_1)(\sigma_\ell \sigma_{\ell-1} \cdots \sigma_2) \cdots (\sigma_\ell \sigma_{\ell-1} \sigma_{\ell-2})
(2\lambda_{\ell - 2} - \lambda_\ell) \; = \; \cdots \\ \notag
& = & \sigma_\ell \sigma_{\ell-1} \cdots \sigma_1 (2\lambda_1 - \lambda_\ell)
\; = \; 2\lambda_{\ell-1} - 2\lambda_\ell - 2\lambda_{\ell-1} + \lambda_\ell 
\; = \; -\lambda_\ell.
\end{eqnarray}
Adhering to our convention (i.e. $\lambda_0=0$), we have that 
\begin{equation} \label{caction}
   \sigma_{C}^{(\ell)}(\lambda_j) = \lambda_{\ell-j} - \lambda_\ell
\end{equation}
for all $j=1,\dots,\ell$.

A quick calculation now shows that 
\begin{eqnarray*}
\sigma_{C}^{(\ell)}(\alpha_0) & = & \sigma_{C}^{(\ell)}(-\theta) \; = \; \alpha_\ell \\
\sigma_{C}^{(\ell)}(\alpha_j) & = & \alpha_{\ell-j} \quad (\mathrm{for}\;  1 \le j < \ell)\\
\sigma_{C}^{(\ell)}(\alpha_\ell) & = & -\theta \; = \; \alpha_0
\end{eqnarray*}

\subsection{Type $D_\ell$ (any rank)}

Looking at the Cartan matrix of type $D_\ell$, we see that
$\alpha_i = -\lambda_{i-1} + 2\lambda_i - \lambda_{i+1}$ for $1 \le i < \ell-2$,
$\alpha_{\ell-2} = -\lambda_{\ell-3}+2\lambda_{\ell-2}-\lambda_{\ell-1}-\lambda_\ell$,
$\alpha_{\ell-1} = -\lambda_{\ell-2}+2\lambda_{\ell-1}$, and
$\alpha_\ell = -\lambda_{\ell-2}+2\lambda_\ell$.

First, recall that $\sigma_{D}^{(1)} = \sigma_1 \sigma_2 \cdots \sigma_\ell \sigma_{\ell-2}
\sigma_{\ell-3} \cdots \sigma_2 \sigma_1$.

As a first step, we determine that action of $\sigma_1\sigma_2\cdots\sigma_\ell$ on
$\lambda_j$. For $1 \leq j < \ell-2$ we have:
\begin{eqnarray*}
\sigma_1\sigma_2\cdots\sigma_\ell(\lambda_j) 
& = & \sigma_1\sigma_2\cdots\sigma_j(\lambda_j) 
\; = \; \sigma_1\sigma_2\cdots\sigma_{j-1}(\lambda_{j-1}-\lambda_j+\lambda_{j+1}) 
\; = \; \cdots \\
& = & \sigma_1(\lambda_1-\lambda_2+\lambda_{j+1}) 
\; = \; -\lambda_1+\lambda_{j+1}
\end{eqnarray*}
\begin{eqnarray*}
\sigma_1\sigma_2\cdots\sigma_\ell(\lambda_{\ell-2}) 
& = & \sigma_1\sigma_2\cdots\sigma_{\ell-2}(\lambda_{\ell-2})
\; = \; \sigma_1\sigma_2\cdots\sigma_{\ell-3} (\lambda_{\ell-3}-\lambda_{\ell-2}+\lambda_{\ell-1}+\lambda_\ell) \\
& = & \sigma_1\sigma_2\cdots\sigma_{\ell-4} (\lambda_{\ell-4}-\lambda_{\ell-3}+\lambda_{\ell-1}+\lambda_\ell) \; = \; \cdots \\
& = & \sigma_1(\lambda_1-\lambda_2+\lambda_{\ell-1}+\lambda_\ell)
\; = \; -\lambda_1+\lambda_{\ell-1}+\lambda_\ell
\end{eqnarray*}
\begin{eqnarray*}
\sigma_1\sigma_2\cdots\sigma_\ell(\lambda_{\ell-1}) 
& = & \sigma_1\sigma_2\cdots\sigma_{\ell-1}(\lambda_{\ell-1})
\; = \; \sigma_1\sigma_2\cdots\sigma_{\ell-2}(\lambda_{\ell-2}-\lambda_{\ell-1}) \\
& = & \sigma_1\sigma_2\cdots\sigma_{\ell-3}(\lambda_{\ell-3}-\lambda_{\ell-2}+\lambda_\ell) 
\; = \; \sigma_1\sigma_2\cdots\sigma_{\ell-4} (\lambda_{\ell-4}-\lambda_{\ell-3}+\lambda_\ell) 
\\ & = & \cdots 
\; = \; \sigma_1(\lambda_1-\lambda_2+\lambda_\ell)
\; = \; -\lambda_1+\lambda_\ell
\end{eqnarray*}
\begin{eqnarray*}
\sigma_1\sigma_2\cdots\sigma_\ell(\lambda_\ell) 
& = & \sigma_1\sigma_2\cdots\sigma_{\ell-1}(\lambda_{\ell-2}-\lambda_\ell)
\; = \; \sigma_1\sigma_2\cdots\sigma_{\ell-2}(\lambda_{\ell-2}-\lambda_\ell) \\
& = & \sigma_1\sigma_2\cdots\sigma_{\ell-3}(\lambda_{\ell-3}-\lambda_{\ell-2}+\lambda_{\ell-1}) 
\; = \; \sigma_1\sigma_2\cdots\sigma_{\ell-4} (\lambda_{\ell-4}-\lambda_{\ell-3}+\lambda_{\ell-1}) 
\\ & = & \cdots 
\; = \; \sigma_1(\lambda_1-\lambda_2+\lambda_{\ell-1})
\; = \; -\lambda_1+\lambda_{\ell-1}
\end{eqnarray*}

For $1 \le j \le \ell-2$ we have:
\begin{eqnarray} \notag
 \sigma_{D}^{(1)}(\lambda_j) & = & \sigma_1 \sigma_2 \cdots \sigma_\ell \sigma_{\ell-2} \sigma_{\ell-3} \cdots \sigma_2 \sigma_1(\lambda_j)
\; = \; \sigma_1 \sigma_2 \cdots \sigma_\ell \sigma_{\ell-2} \cdots \sigma_j (\lambda_j) \\ \notag
& = & \sigma_1 \sigma_2 \cdots \sigma_\ell \sigma_{\ell-2} \cdots \sigma_{j+1} (\lambda_{j-1}-\lambda_j+\lambda_{j+1}) \\ \notag
& = & \sigma_1 \sigma_2 \cdots \sigma_\ell \sigma_{\ell-2} \cdots \sigma_{j+2} (\lambda_{j-1}-\lambda_{j+1}+\lambda_{j+2}) \; = \; \cdots \\ \notag
& = & \sigma_1 \sigma_2 \cdots \sigma_\ell \sigma_{\ell-2} (\lambda_{j-1}-\lambda_{\ell-3}+\lambda_{\ell-2}) \\ \notag
& = & \sigma_1 \sigma_2 \cdots \sigma_\ell (\lambda_{j-1}-\lambda_{\ell-2}+\lambda_{\ell-1}+\lambda\ell) \\ \notag
& = & (-\lambda_1+\lambda_j) - (-\lambda_1+\lambda_{\ell-1}+\lambda_\ell) +
      (-\lambda_1+\lambda_\ell) + (-\lambda_1+\lambda_{\ell-1}) \\
& = & -2\lambda_1+\lambda_j \label{daction1j}
\end{eqnarray}
\begin{equation}
\sigma_{D}^{(1)}(\lambda_{\ell-1}) \; = \; \sigma_1 \sigma_2 \cdots \sigma_\ell \sigma_{\ell-2} \sigma_{\ell-3} \cdots \sigma_2 \sigma_1(\lambda_{\ell-1}) 
\; = \; \sigma_1 \sigma_2 \cdots \sigma_\ell (\lambda_{\ell-1}) 
\; = \; -\lambda_1+\lambda_\ell \label{daction1ell-1}
\end{equation}
\begin{equation}
\sigma_{D}^{(1)}(\lambda_\ell) \; = \; \sigma_1 \sigma_2 \cdots \sigma_\ell \sigma_{\ell-2} \sigma_{\ell-3} \cdots \sigma_2 \sigma_1(\lambda_\ell) 
\; = \; \sigma_1 \sigma_2 \cdots \sigma_\ell (\lambda_\ell) 
\; = \; -\lambda_1+\lambda_{\ell-1} \label{daction1ell}
\end{equation}

A quick calculation shows:
\begin{eqnarray*}
\sigma_{D}^{(1)}(\alpha_0) & = & \sigma_{D}^{(1)}(-\theta) \; = \; \alpha_1 \\
\sigma_{D}^{(1)}(\alpha_1) & = & -\theta \; = \; \alpha_0 \\
\sigma_{D}^{(1)}(\alpha_j) & = & \alpha_j \quad (\mathrm{for}\; 2\le j \le \ell-2) \\
\sigma_{D}^{(1)}(\alpha_{\ell-1}) & = & \alpha_\ell \\
\sigma_{D}^{(1)}(\alpha_\ell) & = & \alpha_{\ell-1}.
\end{eqnarray*}

To help determine the actions of $\sigma_{D}^{(\ell-1)}$ and $\sigma_{D}^{(\ell)}$
on each $\lambda_j$, we will first consider $\sigma_{\ell-2}\sigma_{\ell-3}\cdots\sigma_1$.
For $1 \leq j \leq \ell-2$ we have:
\begin{eqnarray*}
\sigma_{\ell-2}\sigma_{\ell-3}\cdots\sigma_1(\lambda_j) 
& = & \sigma_{\ell-2}\sigma_{\ell-3}\cdots\sigma_j(\lambda_j) 
\; = \; \sigma_{\ell-2}\sigma_{\ell-3}\cdots\sigma_{j+1} (\lambda_{j-1}-\lambda_j+\lambda_{j+1}) \\
& = & \sigma_{\ell-2}\sigma_{\ell-3}\cdots\sigma_{j+2} (\lambda_{j-1}-\lambda_{j+1}+\lambda_{j+2}) \; = \; \cdots \\
& = & \sigma_{\ell-2}(\lambda_{j-1}-\lambda_{\ell-3}+\lambda_{\ell-2}) 
\; = \; \lambda_{j-1}-\lambda_{\ell-2}+\lambda_{\ell-1}+\lambda_\ell
\end{eqnarray*}
\[ \sigma_{\ell-2}\sigma_{\ell-3}\cdots\sigma_1(\lambda_{\ell-1}) = \lambda_{\ell-1} 
   \qquad \mathrm{and} \qquad
   \sigma_{\ell-2}\sigma_{\ell-3}\cdots\sigma_1(\lambda_\ell) = \lambda_\ell \]
Next, we consider $\sigma_{\ell-1}\sigma_{\ell-2}\sigma_{\ell-3}\cdots\sigma_1$.
For $1 \leq j \leq \ell-2$, we have:
\begin{eqnarray} \label{Dell-1Sigmas}
\sigma_{\ell-1}\sigma_{\ell-2}\sigma_{\ell-3}\cdots\sigma_1(\lambda_j) 
& = & \sigma_{\ell-1}(\lambda_{j-1}-\lambda_{\ell-2}+\lambda_{\ell-1}+\lambda_\ell) 
\; = \; \lambda_{j-1}-\lambda_{\ell-1}+\lambda_\ell
\end{eqnarray}
\[ \sigma_{\ell-1}\sigma_{\ell-2}\sigma_{\ell-3}\cdots\sigma_1(\lambda_{\ell-1}) =   
   \lambda_{\ell-2}-\lambda_{\ell-1} 
   \qquad \mathrm{and} \qquad
   \sigma_{\ell-1}\sigma_{\ell-2}\sigma_{\ell-3}\cdots\sigma_1(\lambda_\ell) = \lambda_\ell \]
Finally, consider $\sigma_\ell\sigma_{\ell-2}\sigma_{\ell-3}\cdots\sigma_1$.
For $1 \leq j \leq \ell-2$, we have:
\begin{eqnarray} \label{DellSigmas}
\sigma_\ell\sigma_{\ell-2}\sigma_{\ell-3}\cdots\sigma_1(\lambda_j) 
& = & \sigma_\ell(\lambda_{j-1}-\lambda_{\ell-2}+\lambda_{\ell-1}+\lambda_\ell) 
\; = \; \lambda_{j-1}+\lambda_{\ell-1}-\lambda_\ell
\end{eqnarray}
\[ \sigma_\ell\sigma_{\ell-2}\sigma_{\ell-3}\cdots\sigma_1(\lambda_{\ell-1}) = \lambda_{\ell-1} 
   \qquad \mathrm{and} \qquad
   \sigma_\ell\sigma_{\ell-2}\sigma_{\ell-3}\cdots\sigma_1(\lambda_\ell) = 
   \lambda_{\ell-2}-\lambda_\ell \]

\subsection{Type $D_{\ell}$ (odd rank)}

Now consider the case when $\ell$ is odd and recall that
\[ \sigma_{D}^{(\ell -1)} = (\sigma_{\ell-1}\sigma_{\ell-2}\cdots\sigma_1) (\sigma_\ell\sigma_{\ell-2}\sigma_{\ell-3}\cdots\sigma_2) (\sigma_{\ell-1}\sigma_{\ell-2}\cdots\sigma_3) \cdots (\sigma_{\ell}). \]

First, consider the case of $\lambda_1$.
\begin{eqnarray} \notag
 \sigma_{D}^{(\ell -1)}(\lambda_1) & = & (\sigma_{\ell-1}\sigma_{\ell-2}\cdots\sigma_1) (\sigma_\ell\sigma_{\ell-2}\sigma_{\ell-3}\cdots\sigma_2) (\sigma_{\ell-1}\sigma_{\ell-2}\cdots\sigma_3) \cdots (\sigma_{\ell})(\lambda_1)
\; = \; \cdots \\
& = & (\sigma_{\ell-1}\sigma_{\ell-2}\cdots\sigma_1)(\lambda_1)
\; = \; -\lambda_{\ell-1}+\lambda_\ell \label{dactionell-11}
\end{eqnarray}
(We obtain the last step by using (\ref{Dell-1Sigmas}) above.)

Next, consider the case of $\lambda_j$, $1 < j \leq \ell-2$ and $j$ odd.
By applying (\ref{Dell-1Sigmas}) and (\ref{DellSigmas}) successively, we obtain the following:
\begin{eqnarray*}
 \sigma_{D}^{(\ell-1)}(\lambda_j) & = & (\sigma_{\ell-1}\sigma_{\ell-2}\cdots\sigma_1) (\sigma_\ell\sigma_{\ell-2}\sigma_{\ell-3}\cdots\sigma_2) (\sigma_{\ell-1}\sigma_{\ell-2}\cdots\sigma_3) \cdots (\sigma_{\ell})(\lambda_j) 
\; = \; \cdots \\
 & = & (\sigma_{\ell-1}\sigma_{\ell-2}\cdots\sigma_1) \cdots (\sigma_{\ell-1}\sigma_{\ell-2}\cdots\sigma_j)(\lambda_j) \\
 & = & (\sigma_{\ell-1}\sigma_{\ell-2}\cdots\sigma_1) \cdots (\sigma_{\ell}\sigma_{\ell-2}\cdots\sigma_{j-1})(\lambda_{j-1}-\lambda_{\ell-1}+\lambda_\ell) \\
 & = & (\sigma_{\ell-1}\sigma_{\ell-2}\cdots\sigma_1) \cdots (\sigma_{\ell-1}\sigma_{\ell-2}\cdots\sigma_{j-2})(\lambda_{j-2}+\lambda_{\ell-2}-2\lambda_\ell) \\
 & = & (\sigma_{\ell-1}\sigma_{\ell-2}\cdots\sigma_1) \cdots (\sigma_{\ell}\sigma_{\ell-2}\cdots\sigma_{j-3})(\lambda_{j-3}+\lambda_{\ell-3}-2\lambda_{\ell-1}) 
\; = \; \cdots \\
 & = & (\sigma_{\ell-1}\sigma_{\ell-2}\cdots\sigma_1) (\lambda_{1}+\lambda_{\ell-(j-1)}-2\lambda_\ell) \\ 
& = & -\lambda_{\ell-1}+\lambda_\ell+\lambda_{\ell-j}-\lambda_{\ell-1}+\lambda_\ell-2\lambda_\ell\\
 & = & \lambda_{\ell-j}-2\lambda_{\ell-1}
\end{eqnarray*}

A similar calculation shows that the same holds for $j$ even and $2 \leq j \leq \ell-3$. 
Therefore, we have that 
\begin{equation}
\sigma_{D}^{(\ell-1)}(\lambda_j) = \lambda_{\ell-j}-2\lambda_{\ell-1} \quad (\mathrm{for}\;1 < j \leq \ell-2) \label{dactionell-1j}
\end{equation}
This leaves the cases $j=\ell-1$ and $j=\ell$.
\begin{eqnarray} \notag
 \sigma_{D}^{(\ell-1)}(\lambda_{\ell-1}) & = & (\sigma_{\ell-1}\sigma_{\ell-2}\cdots\sigma_1) (\sigma_\ell\sigma_{\ell-2}\sigma_{\ell-3}\cdots\sigma_2) (\sigma_{\ell-1}\sigma_{\ell-2}\cdots\sigma_3) \cdots (\sigma_{\ell})(\lambda_{\ell-1})\\ \notag
& = & (\sigma_{\ell-1}\sigma_{\ell-2}\cdots\sigma_1) \cdots (\sigma_{\ell-1}\sigma_{\ell-2})(\lambda_{\ell-1})\\ \notag
& = & (\sigma_{\ell-1}\sigma_{\ell-2}\cdots\sigma_1) \cdots (\sigma_{\ell}\sigma_{\ell-2}\sigma_{\ell-3})(\lambda_{\ell-2}-\lambda_{\ell-1})\\ \notag
& = & (\sigma_{\ell-1}\sigma_{\ell-2}\cdots\sigma_1) \cdots (\sigma_{\ell-1}\sigma_{\ell-2}\cdots\sigma_{\ell-4})(\lambda_{\ell-3}-\lambda_\ell)
\; = \; \cdots \\ \notag
& = & (\sigma_{\ell-1}\sigma_{\ell-2}\cdots\sigma_1)(\lambda_2-\lambda_\ell)\\ 
& = & \lambda_1-\lambda_{\ell-1}+\lambda_\ell-\lambda_\ell \; = \; \lambda_1-\lambda_{\ell-1} \label{dactionell-1ell-1}
\end{eqnarray}
\begin{eqnarray} \notag
 \sigma_{D}^{(\ell-1)}(\lambda_\ell) & = & (\sigma_{\ell-1}\sigma_{\ell-2}\cdots\sigma_1) (\sigma_\ell\sigma_{\ell-2}\sigma_{\ell-3}\cdots\sigma_2) (\sigma_{\ell-1}\sigma_{\ell-2}\cdots\sigma_3) \cdots (\sigma_{\ell})(\lambda_\ell)\\ \notag
& = & (\sigma_{\ell-1}\sigma_{\ell-2}\cdots\sigma_1) \cdots (\sigma_{\ell-1}\sigma_{\ell-2})(\lambda_{\ell-2}-\lambda_\ell)\\ \notag
& = & (\sigma_{\ell-1}\sigma_{\ell-2}\cdots\sigma_1) \cdots (\sigma_{\ell}\sigma_{\ell-2}\sigma_{\ell-3})(\lambda_{\ell-3}-\lambda_{\ell-1})\\ \notag
& = & (\sigma_{\ell-1}\sigma_{\ell-2}\cdots\sigma_1) \cdots (\sigma_{\ell-1}\sigma_{\ell-2}\cdots\sigma_{\ell-4})(\lambda_{\ell-4}-\lambda_\ell)
\; = \; \cdots \\ \notag
& = & (\sigma_{\ell-1}\sigma_{\ell-2}\cdots\sigma_1)(\lambda_1-\lambda_\ell)\\
& = & -\lambda_{\ell-1}+\lambda_\ell-\lambda_\ell \; = \; -\lambda_{\ell-1} \label{dactionell-1ell}
\end{eqnarray}

A quick calculation shows:
\begin{eqnarray*}
\sigma_{D}^{(\ell-1)}(\alpha_0) & = & \sigma_{D}^{(\ell-1)}(-\theta) \; = \; \alpha_{\ell-1} \\
\sigma_{D}^{(\ell-1)}(\alpha_1) & = & \alpha_\ell\\
\sigma_{D}^{(\ell-1)}(\alpha_j) & = & \alpha_{\ell-j} \quad (\mathrm{for}\; 2\le j \le \ell-1) \\
\sigma_{D}^{(\ell-1)}(\alpha_\ell) & = & -\theta \; = \; \alpha_0.
\end{eqnarray*}

Using a Dynkin diagram symmetry, we see that all of these results should still hold
if we interchange the labels $\ell-1$ and $\ell$. So we also have that
\begin{eqnarray} \notag
\sigma_{D}^{(\ell)}(\lambda_1) & = & \lambda_{\ell-1} - \lambda_{\ell}\\ 
\sigma_{D}^{(\ell)}(\lambda_j) & = & \lambda_{\ell-j} - 2\lambda_\ell  \quad (\mathrm{for}\; 2\le j \le \ell-2) \\ \notag
\sigma_{D}^{(\ell)}(\lambda_{\ell-1}) & = & -\lambda_\ell \\ \notag
\sigma_{D}^{(\ell)}(\lambda_\ell) & = & \lambda_1-\lambda_\ell,
\end{eqnarray}
and also
\begin{eqnarray*}
\sigma_{D}^{(\ell)}(\alpha_0) & = &  \sigma_{D}^{(\ell)}(-\theta) \; = \; \alpha_\ell\\
\sigma_{D}^{(\ell)}(\alpha_j) & = & \alpha_{\ell-j} \quad (\mathrm{for}\; 1\le j \le \ell-2)\\
\sigma_{D}^{(\ell)}(\alpha_{\ell-1}) & = & -\theta \; = \; \alpha_0 \\
\sigma_{D}^{(\ell)}(\alpha_\ell) & = & \alpha_1.
\end{eqnarray*}

\subsection{Type $D_{\ell}$ (even rank)}

Almost identical calculations reveal that for even $\ell$ we have the following:
\begin{eqnarray} \notag
\sigma_{D}^{(\ell-1)}(\lambda_1) & = & -\lambda_{\ell-1} + \lambda_{\ell} \\ \label{dactioneven}
\sigma_{D}^{(\ell-1)}(\lambda_j) & = & \lambda_{\ell-j} - 2\lambda_{\ell-1} \quad (\mathrm{for}\; 2\le j \le \ell-2) \\ \notag
\sigma_{D}^{(\ell-1)}(\lambda_{\ell-1}) & = & -\lambda_{\ell-1} \\ \notag
\sigma_{D}^{(\ell-1)}(\lambda_\ell) & = & \lambda_1 - \lambda_{\ell-1} 
\end{eqnarray}
and it follows that:
\begin{eqnarray*}
\sigma_{D}^{(\ell-1)}(\alpha_0) & = & \sigma_{D}^{(\ell-1)}(-\theta) \; = \; \alpha_{\ell-1}\\ 
\sigma_{D}^{(\ell-1)}(\alpha_1) & = & \alpha_\ell\\
\sigma_{D}^{(\ell-1)}(\alpha_j) & = & \alpha_{\ell-j} \quad (\mathrm{for}\; 2\le j \le \ell-2) \\
\sigma_{D}^{(\ell-1)}(\alpha_{\ell-1}) & = & -\theta \; = \; \alpha_0\\
\sigma_{D}^{(\ell-1)}(\alpha_\ell) &=& \alpha_1.
\end{eqnarray*}

Again using a Dynkin diagram symmetry, we see that all of these results should still hold
if we interchange the labels $\ell-1$ and $\ell$. So we also have that:
\begin{eqnarray} \notag
\sigma_{D}^{(\ell)}(\lambda_1) & = & \lambda_{\ell-1} - \lambda_{\ell} \\
\sigma_{D}^{(\ell)}(\lambda_j) & = & \lambda_{\ell-j} - 2\lambda_\ell \quad (\mathrm{for}\; 2\le j \le \ell-2) \\ \notag
\sigma_{D}^{(\ell)}(\lambda_{\ell-1}) & = & \lambda_1 - \lambda_\ell \\ \notag
\sigma_{D}^{(\ell)}(\lambda_\ell) & = & - \lambda_\ell
\end{eqnarray}
and it follows that (recall $-\theta=\alpha_0$):
\begin{eqnarray*}
\sigma_{D}^{(\ell)}(\alpha_j) & = & \alpha_{\ell-j} \quad (\mathrm{for}\; 1\le j \le\ell).
\end{eqnarray*}

\subsection{Exceptional Types}

Recall that if $\g$ is of type $E_8$, $F_4$ or $G_2$, then $\g$ has no miniscule weights
and so $P^\vee=Q^\vee$. Thus the action of $\Delta(H,x)$ is always trivial. So we only 
need to consider $\g$ of type $E_6$ and $E_7$. 
 
The following calculations for types $E_6$ and $E_7$ were done with the help of a Maple worksheet which is available at:\\ \verb|http://dimax.rutgers.edu/~sadowski/LieAlgebraCalculations/index.html|

For type $E_6$ using $H^{(1)}$, we have:
\begin{eqnarray} \notag
\sigma_{E}^{(1)}(\lambda_1) &=& -\lambda_1 + \lambda_6\\ \notag
\sigma_{E}^{(1)}(\lambda_2) &=& -2\lambda_1 + \lambda_3\\ \label{eaction1}
\sigma_{E}^{(1)}(\lambda_3) &=& -2\lambda_1 + \lambda_5\\ \notag
\sigma_{E}^{(1)}(\lambda_4) &=& -3\lambda_1 + \lambda_4\\ \notag
\sigma_{E}^{(1)}(\lambda_5) &=& -2\lambda_1 + \lambda_2\\ \notag
\sigma_{E}^{(1)}(\lambda_6) &=& -\lambda_1
\end{eqnarray}
and so
\begin{eqnarray*}
\sigma_{E}^{(1)}(\alpha_0) & = & \sigma_{E}^{(1)}(-\theta) = \alpha_1\\
\sigma_{E}^{(1)}(\alpha_1) & = & \alpha_6 \\
\sigma_{E}^{(1)}(\alpha_2) & = & \alpha_3 \\
\sigma_{E}^{(1)}(\alpha_3) & = & \alpha_5 \\
\sigma_{E}^{(1)}(\alpha_4) & = & \alpha_4 \\
\sigma_{E}^{(1)}(\alpha_5) & = & \alpha_2 \\
\sigma_{E}^{(1)}(\alpha_6) & = & -\theta \; = \; \alpha_0.
\end{eqnarray*}

For type $E_6$ using $H^{(6)}$, we have:
\begin{eqnarray} \notag
\sigma_{E}^{(6)}(\lambda_1) &=& -\lambda_6\\ \notag
\sigma_{E}^{(6)}(\lambda_2) &=& -2\lambda_6 + \lambda_5\\ \label{eaction6}
\sigma_{E}^{(6)}(\lambda_3) &=& -2\lambda_6 + \lambda_2\\ \notag
\sigma_{E}^{(6)}(\lambda_4) &=& -3\lambda_6 + \lambda_4\\ \notag
\sigma_{E}^{(6)}(\lambda_5) &=& -2\lambda_6 + \lambda_3\\ \notag
\sigma_{E}^{(6)}(\lambda_6) &=& -\lambda_6 + \lambda_1
\end{eqnarray}
and so
\begin{eqnarray*}
\sigma_{E}^{(6)}(\alpha_0) & = & \sigma_{E}^{(6)}(-\theta) \; = \; \alpha_6\\
\sigma_{E}^{(6)}(\alpha_1) & = & \alpha_5 \\
\sigma_{E}^{(6)}(\alpha_2) & = & \alpha_4 \\
\sigma_{E}^{(6)}(\alpha_3) & = & \alpha_3 \\
\sigma_{E}^{(6)}(\alpha_4) & = & \alpha_2 \\
\sigma_{E}^{(6)}(\alpha_5) & = & \alpha_1 \\
\sigma_{E}^{(6)}(\alpha_6) & = & -\theta \; = \; \alpha_0.
\end{eqnarray*}

For type $E_7$ using $H^{(7)}$, we have:
\begin{eqnarray} \notag
\sigma_{E}^{(7)}(\lambda_1) &=& -2\lambda_7 + \lambda_6 \\ \notag
\sigma_{E}^{(7)}(\lambda_2) &=& -2\lambda_7 + \lambda_2 \\ \notag
\sigma_{E}^{(7)}(\lambda_3) &=& -3\lambda_7 + \lambda_5 \\ \label{eaction7}
\sigma_{E}^{(7)}(\lambda_4) &=& -4\lambda_7 + \lambda_4\\ \notag
\sigma_{E}^{(7)}(\lambda_5) &=& -3\lambda_7 + \lambda_3\\ \notag
\sigma_{E}^{(7)}(\lambda_6) &=& -2\lambda_7 + \lambda_1\\ \notag
\sigma_{E}^{(7)}(\lambda_7) &=& -\lambda_7
\end{eqnarray}
and so
\begin{eqnarray*}
\sigma_{E}^{(7)}(\alpha_0) & = & \sigma_{E}^{(7)}(-\theta) \; = \; \alpha_7\\
\sigma_{E}^{(7)}(\alpha_1) & = & \alpha_6 \\
\sigma_{E}^{(7)}(\alpha_2) & = & \alpha_2 \\
\sigma_{E}^{(7)}(\alpha_3) & = & \alpha_5 \\
\sigma_{E}^{(7)}(\alpha_4) & = & \alpha_4 \\
\sigma_{E}^{(7)}(\alpha_5) & = & \alpha_3 \\
\sigma_{E}^{(7)}(\alpha_6) & = & \alpha_1 \\
\sigma_{E}^{(7)}(\alpha_7) & = & -\theta \; = \; \alpha_0.
\end{eqnarray*}

\end{document}